\newcommand{\dist}{\mathrm{dist}}
\newcommand{\F}{\mathcal{F}}
\newcommand{\Ha}{\mathcal{H}}
\newcommand{\Ft}{\tilde{\mathcal{F}}}
\newcommand{\nsa}{\mathcal{H}_{\tilde{k}} (\tilde{\Omega})}
\newcommand{\nsb}{\mathcal H_k (\Omega)}
\newcommand{\Omegat}{\tilde{\Omega}}
\pgfplotsset{compat=1.11}
\newlength\fwidth
\newif\ifappendix
\title{Stability of convergence rates: \\ Kernel interpolation on non-Lipschitz domains}
\author[1]{Tizian Wenzel \thanks{tizian.wenzel@mathematik.uni-stuttgart.de, corresponding author}}
\author[2]{Gabriele Santin \thanks{gsantin@fbk.eu, \href{http://orcid.org/0000-0001-6959-1070}{orcid.org/0000-0001-6959-1070}}}
\author[1]{Bernard Haasdonk \thanks{haasdonk@mathematik.uni-stuttgart.de}}
\affil[1]{Institute for Applied Analysis and Numerical Simulation, University of Stuttgart, Germany}
\affil[2]{Digital Society Center, Bruno Kessler Foundation, Trento, Italy}
\begin{document}

\maketitle
  
\begin{abstract}
Error estimates for kernel interpolation in Reproducing Kernel Hilbert Spaces (RKHS) usually assume quite restrictive properties on the shape of the domain, 
especially in the case of infinitely smooth kernels like the popular Gaussian kernel. 

In this paper we prove that it is possible to obtain convergence results (in the number of interpolation 
points) for kernel interpolation for arbitrary domains $\Omega \subset \R^d$, thus allowing for non-Lipschitz domains including e.g.\ cusps and irregular boundaries. %
Especially we 
show that, when going to a smaller domain $\Omegat \subset \Omega \subset \R^d$, the convergence rate does not deteriorate --- i.e.\ the convergence rates are stable with respect to going to a subset.
We obtain this by leveraging an analysis of greedy kernel algorithms.

The impact of this result is explained on the examples of kernels of finite as well as infinite smoothness. A comparison to 
approximation in Sobolev spaces is drawn, where the shape of the domain $\Omega$ has an impact on the approximation properties.
Numerical experiments illustrate and confirm the analysis. %
\end{abstract}

\section{Introduction} \label{sec:introduction}

On a nonempty set $\Omega \subset \R^d$ a real-valued \textit{kernel} is defined as a symmetric function $k: \Omega \times \Omega \rightarrow \R$. For a given set of 
points $X_n := \{x_1, ..., x_n \} \subset \Omega$ the kernel matrix $A \in \R^{n \times n}$ is defined as $A_{ij} := k(x_i, x_j)$. If the kernel matrix is 
positive definite for any choice of pairwise distinct points $X_n \subset \Omega, n \in \N$, then the kernel is called \textit{strictly positive definite}. 

In the following we focus on this class of kernels. For those kernels, there is a unique \textit{Reproducing Kernel Hilbert Space} (RKHS) $\ns$ of functions, which satisfies
\begin{enumerate}
\item $k(\cdot, x) \in \ns \quad \forall x \in \Omega$,
\item $f(x) = \langle f , k(\cdot, x) \rangle_{\ns} \quad \forall x \in \Omega, f \in \ns$.
\end{enumerate}

We remark that for certain types of kernels and domains the RKHS $\ns$ is found to be norm equivalent to some Sobolev space with specific smoothness $\tau > 
d/2$, i.e.\ $\ns \asymp H^\tau(\Omega)$. Examples are some radial basis function kernels like the Mat{\'e}rn kernels (see Eq.\ \eqref{eq:matern_kernels} below) 
on Lipschitz domains $\Omega$.

For a function $f \in \ns$ and pairwise distinct points $X_n \subset \Omega$ there is a unique minimum-norm interpolating function, which is given by the 
projection of $f$ onto the subspace $V(X_n) := \{ k(\cdot, x_i), x_i \in X_n \}$ spanned by kernel functions. 
It holds
\begin{align}
\label{eq:interpolation_conditions}
\Pi_{\ns, V(X_n)}(f)(x_i) = f(x_i) \quad \forall x_i \in X_n,
\end{align}
where $\Pi_{\ns, V(X_n)}$ denotes the orthogonal projector from $\ns$ onto the closed subspace $V(X_n)$.

In order to quantify the worst-case (i.e.\ for any function $f \in \ns$) pointwise error between a function and its interpolant, the so called \textit{power function} $P_{k, \Omega, X_n}$ is introduced, 
which is defined as
\begin{align}
\label{eq:definition_power_func}
P_{k, \Omega, X_n}(x) :=& ~ \sup_{0 \neq f \in \nsb} \frac{|(f - \Pi_{\nsb, V(X_N)}(f))(x)|}{\Vert f \Vert_{\nsb}} \notag \\
=& \Vert k(\cdot, x) - \Pi_{\nsb, V(X_N)}(k(\cdot, x)) \Vert_{\nsb}. 
\end{align}
Note that the dependency of the power function on the kernel $k$ and the domain $\Omega$ was explicitly included in the subscript of the power function for later 
purposes. 

Typical results for bounding the maximal worst case interpolation error $\Vert P_{k, \Omega, X_n} \Vert_{L^\infty(\Omega)}$ in the number $n$ of interpolation 
points $X_n$ make use of the equivalence of the RKHS $\ns$ to Sobolev spaces $H^\tau(\Omega)$ as mentioned earlier. These equivalence results however 
require restrictive assumptions on the domain $\Omega$ such as Lipschitz boundaries or cone conditions. On the other hand, these conditions are not required for 
having an RKHS $\ns$.
In this paper we close this gap by showing that the convergence rates for interpolation in RKHS are stable when going to a smaller domain --- in the sense that the convergence order (in the number of interpolation points) does not deteriorate.
In this way we obtain convergence rates also for non-Lipschitz domains.

The paper is organized as follows. After recalling background information on kernel based interpolation in Section \ref{sec:background}, we review and extend 
an analysis of greedy algorithms in an abstract setting in Section \ref{sec:abstract_setting}. 
This analysis of greedy algorithms is leveraged as a tool and applied to general (non-greedy) kernel based interpolation in Section \ref{sec:kernel_interpolation}, where our main result is derived in Theorem \ref{th:main_result} and discussed and applied afterwards. 
Section \ref{sec:num_experiments} illustrates the theoretical findings with numerical experiments. 
The final Section \ref{sec:conclusion} summarizes and concludes the paper by giving an outlook to future research.

\section{Background} \label{sec:background}

We proceed by recalling background information 
on greedy kernel 
interpolation \citep{marchi2005optimal, santin2017convergence, wenzel2022analysis} and on the dependency of the RKHS $\ns$ on the domain $\Omega$ \citep{wendland2005scattered}, 
which is required to prove our main result. 

\subsection{Greedy kernel interpolation}

A suitable choice of interpolation points $X_n \subset \Omega$ is usually unclear a priori. For this task, a popular approach is to use greedy kernel methods 
to 
pick the interpolation points step by step. One starts with an empty set $X_0 := \emptyset$ and adds points incrementally via $X_{n+1} := X_n \cup \{ x_{n+1} 
\}$, where $x_{n+1}$ is selected according to some selection criterion. For later purposes we recall the so-called $P$-greedy selection criterion, which uses
\begin{align}
\label{eq:p_greedy_criterion}
x_{n+1} := \argmax_{x \in \Omega} P_{k, \Omega, X_n}(x).
\end{align}
Since it holds $P_{k, \Omega, X_n}(x_i) = 0$ if and only if $x_i \in X_n$, the $P$-greedy selection criterion yields an infinite sequence of pairwise distinct points 
(given that $\Omega$ consists of infinitely many points). The convergence of $\Vert P_{k, \Omega, X_n} \Vert_{L^\infty(\Omega)}$ for $n \rightarrow \infty$ was 
analyzed in \citep{marchi2005optimal,santin2017convergence,wenzel2021novel}.

We remark that there are more greedy methods available, especially the target-data dependent $f$-greedy or $f/P$-greedy \citep{mueller2009komplexitaet, SchWen2000}, but also stabilization of those algorithms as analyzed in \citep{wenzel2021novel}. An analysis of a unifying scale of target-data dependent algorithms including the aforementioned ones can be found in \citep{wenzel2022analysis}.

\subsection{Restrictions and extensions of domains}
\label{subsec:restriction_extension_domain}

This section gives some background information on the dependency of the RKHS $\nsb$ on the domain $\Omega$, and we refer to \citep[Section 10.7]{wendland2005scattered} 
for further details. Especially, we will state our notation more precisely. 

We consider two nested domains 
\begin{align}
\label{eq:nested_domains}
\tilde{\Omega} \subset \Omega \subset \R^d.
\end{align}
Let $k := k_\Omega: \Omega \times \Omega \rightarrow \R$ be a strictly positive definite kernel defined on $\Omega$. Then we define the \textit{restricted kernel} $\tilde{k}$ via
\begin{align}
\label{eq:def_restricted_kernel}
\tilde{k} := k_{\tilde{\Omega}}: \Omegat \times \Omegat \rightarrow \R, (x, y) \mapsto k(x, y).
\end{align}
With this notation we can distinguish between $k(\cdot, x) \in \nsb$ and $\tilde{k}(\cdot, x) \in \nsa$ 
for $x \in 
\tilde{\Omega}$ as elements of different spaces, while both $k$ and $\tilde{k}$ refer to the same mapping. 

Theorem 10.46 in \citep{wendland2005scattered} states that each function $\tilde{f} \in \nsa$ has a natural extension to a function $E\tilde{f} \in \nsb$ and it holds $\Vert E\tilde{f} \Vert_{\nsb} = \Vert \tilde{f} \Vert_{\nsa}$. 
The operator $E: \nsa \rightarrow \nsb$ will be called the extension operator.
Especially it holds $E\tilde{k}(\cdot, x) = k(\cdot, x)$ for all $x \in \Omegat$.

For $X_n \subset \Omegat$ we define
\begin{align*}
\tilde{V}(X_n) := \Sp \{ \tilde{k}(\cdot, x_i), x_i \in X_n \} \subset \nsa, \\
V(X_n) := \Sp \{ k(\cdot, x_i), x_i \in X_n \} \subset \nsb,
\end{align*}
where we recalled the same definition of $V(X_n)$ given above.

We have the following result, which states that these projections and extensions commute.

\begin{lemma}
\label{lem:utility_lemma}
For $\tilde{f} \in \nsa$ and $X_n \subset \Omegat$ it holds 
\begin{align*}
E\Pi_{\nsa, \tilde{V}(X_n)}(\tilde{f}) = \Pi_{\nsb, V(X_n)}(E \tilde{f}).
\end{align*}
\end{lemma}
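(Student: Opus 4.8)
The plan is to verify the identity by using the characterization of the orthogonal projection together with the defining properties of the embedding operator $E$, in particular the isometry $\Vert E\tilde f\Vert_{\nsb} = \Vert \tilde f\Vert_{\nsa}$ and the fact that $E\tilde k(\cdot,x) = k(\cdot,x)$ for $x\in\Omegat$. First I would record the (polarization) consequence that $E$ is not merely an isometry but preserves inner products, i.e.\ $\langle E\tilde f, E\tilde g\rangle_{\nsb} = \langle \tilde f,\tilde g\rangle_{\nsa}$ for all $\tilde f,\tilde g\in\nsa$; this is immediate from the isometry property since $E$ is linear. In particular $E$ maps $\tilde V(X_n)$ onto $V(X_n)$ bijectively and isometrically, because $E$ sends the spanning set $\{\tilde k(\cdot,x_i)\}$ to the spanning set $\{k(\cdot,x_i)\}$.

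Next I would use the variational characterization of the projection: $\Pi_{\nsb, V(X_n)}(E\tilde f)$ is the unique element $v\in V(X_n)$ such that $E\tilde f - v \perp V(X_n)$ in $\nsb$. So it suffices to show that $w := E\Pi_{\nsa,\tilde V(X_n)}(\tilde f)$ has this property. Clearly $w\in V(X_n)$ since $\Pi_{\nsa,\tilde V(X_n)}(\tilde f)\in\tilde V(X_n)$ and $E\tilde V(X_n) = V(X_n)$. For the orthogonality, take an arbitrary basis element $k(\cdot,x_i)$ with $x_i\in X_n$; writing $k(\cdot,x_i) = E\tilde k(\cdot,x_i)$ and using that $E$ preserves inner products, I compute
\begin{align*}
\langle E\tilde f - w, k(\cdot,x_i)\rangle_{\nsb}
&= \langle E\tilde f - E\Pi_{\nsa,\tilde V(X_n)}(\tilde f), E\tilde k(\cdot,x_i)\rangle_{\nsb} \\
&= \langle \tilde f - \Pi_{\nsa,\tilde V(X_n)}(\tilde f), \tilde k(\cdot,x_i)\rangle_{\nsa} = 0,
\end{align*}
the last equality being the defining orthogonality of the projection $\Pi_{\nsa,\tilde V(X_n)}$ in $\nsa$. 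Since the $k(\cdot,x_i)$ span $V(X_n)$, this gives $E\tilde f - w \perp V(X_n)$, and by uniqueness of the projection $w = \Pi_{\nsb,V(X_n)}(E\tilde f)$, which is the claim.

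I do not expect a genuine obstacle here; the result is essentially a bookkeeping exercise once the inner-product-preserving property of $E$ is in hand. The one point that requires a little care is justifying that $E$ preserves inner products (not just norms) and that it is linear — both follow from Theorem 10.46 of \cite{Wendland2005} as quoted, since an isometric linear map between Hilbert spaces automatically preserves inner products by polarization. A secondary subtlety is that the orthogonality check should be done against the spanning set rather than an orthonormal basis, to avoid any assumption that the $\tilde k(\cdot,x_i)$ are linearly independent; but since the kernel is strictly positive definite and the $x_i$ are pairwise distinct, linear independence in fact holds, and $E$ being injective then transfers it to $V(X_n)$ as well, so either formulation works.
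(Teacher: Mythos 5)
Your proof is correct, but it takes a different route from the paper's. The paper argues via the explicit kernel-expansion representation of the minimum-norm interpolant: it writes $\Pi_{\nsa, \tilde{V}(X_n)}(\tilde{f}) = \sum_j \alpha_j^{(n)} \tilde{k}(\cdot,x_j)$ with coefficients fixed by the interpolation conditions $\tilde{f}(x_i) = \sum_j \alpha_j^{(n)} \tilde{k}(x_i,x_j)$, pushes the sum through $E$ using linearity and $E\tilde{k}(\cdot,x) = k(\cdot,x)$, and then observes that since $E\tilde{f}|_{\Omegat} = \tilde{f}$ the same coefficients solve the interpolation problem for $E\tilde{f}$ in $\nsb$. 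You instead use the abstract variational characterization of the orthogonal projection together with the fact that the linear isometry $E$ preserves inner products by polarization, checking orthogonality of $E\tilde{f} - E\Pi_{\nsa,\tilde{V}(X_n)}(\tilde{f})$ against the spanning set of $V(X_n)$. Both arguments are sound; yours has the advantage that it never invokes the interpolation conditions or the kernel matrix, and therefore extends verbatim to arbitrary closed subspaces $\tilde{V}_n$ with $E\tilde{V}_n = V_n$ --- precisely the generalization the authors mention in the remark following Lemma \ref{lem:power_funcs_on_diff_domains} --- whereas the paper's computation is tied to the specific kernel-translate subspaces. The paper's version, on the other hand, is more concrete and makes the invariance of the expansion coefficients explicit, which is the mechanism a practitioner would actually compute with. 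Your closing remarks on polarization and on checking orthogonality against a spanning set rather than a basis are exactly the right points of care, and both are handled correctly.
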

\begin{proof}
Let 
\begin{align*}
\Pi_{\nsa, \tilde{V}(X_n)}(\tilde{f}) = \sum_{j=1}^n \alpha_j^{(n)} \tilde{k}(\cdot, x_j),
\end{align*}
with the kernel expansion coefficients $\{ \alpha_j^{(n)} \}_{j=1}^n \subset \R$ being determined by the interpolation conditions (see Eq.~\eqref{eq:interpolation_conditions}), 
i.e.\ $\tilde{f}(x_i) = \sum_{j=1}^n \alpha_j^{(n)} \tilde{k}(x_i, x_j) $ for $i=1, \dots, n$. 
Based on the linearity of the extension operator $E$ and the identity $E\tilde{k}(\cdot, x) = k(\cdot, x)$ it holds
\begin{align*}
E\Pi_{\nsa, \tilde{V}(X_n)}(\tilde{f}) &= E \sum_{j=1}^n \alpha_j^{(n)} \tilde{k}(\cdot, x_j) 
= \sum_{j=1}^n \alpha_j^{(n)} E\tilde{k}(\cdot, x_j) \\
&= \sum_{j=1}^n \alpha_j^{(n)} k(\cdot, x_j) 
= \Pi_{\nsb, V(X_n)}(E\tilde{f}),
\end{align*}
where in the last step we used $E\tilde{f}|_{\tilde{\Omega}} = \tilde{f}$ and $X_n \subset \tilde{\Omega}$, 
so that the interpolation coefficients $\{ \alpha_j^{(n)} \}_{j=1}^n$ are the same both in $\nsa$ and $\nsb$.
\end{proof}

The following lemma shows that the power functions $P_{k, \Omega, X_n}$ and $P_{\tilde{k}, \Omegat, X_n}$ coincide on $\Omegat$, if $X_n \subset \Omegat$:

\begin{lemma}
\label{lem:power_funcs_on_diff_domains}
For $X_n \subset \Omegat$ it holds
\begin{align*}
P_{k, \Omega, X_n}(x) = P_{\tilde{k}, \Omegat, X_n}(x) \qquad \forall x \in \Omegat.
\end{align*}
\end{lemma}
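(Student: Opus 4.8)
The plan is to use the second (norm) characterization of the power function in \eqref{eq:definition_power_func} together with the isometry property of the embedding operator $E$ and the commutation identity from \cref{lem:utility_lemma}. Fix $x \in \Omegat$. Since $x \in \Omegat$, the function $\tilde{k}(\cdot, x)$ is a well-defined element of $\nsa$, and by the defining property of the embedding operator we have $E\tilde{k}(\cdot, x) = k(\cdot, x) \in \nsb$.

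The key step is to rewrite the residual $k(\cdot, x) - \Pi_{\nsb, V(X_n)}(k(\cdot, x))$ as the image under $E$ of the corresponding residual in $\nsa$. Applying \cref{lem:utility_lemma} with $\tilde{f} = \tilde{k}(\cdot, x)$ gives
\begin{align*}
\Pi_{\nsb, V(X_n)}(k(\cdot, x)) = \Pi_{\nsb, V(X_n)}(E\tilde{k}(\cdot, x)) = E\Pi_{\nsa, \tilde{V}(X_n)}(\tilde{k}(\cdot, x)),
\end{align*}
and hence, using linearity of $E$,
\begin{align*}
k(\cdot, x) - \Pi_{\nsb, V(X_n)}(k(\cdot, x)) = E\bigl( \tilde{k}(\cdot, x) - \Pi_{\nsa, \tilde{V}(X_n)}(\tilde{k}(\cdot, x)) \bigr).
\end{align*}

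Finally I would take $\nsb$-norms on both sides and invoke the norm-preservation $\Vert E \tilde{g} \Vert_{\nsb} = \Vert \tilde{g} \Vert_{\nsa}$ (Theorem 10.46 in \cite{Wendland2005}) with $\tilde{g} = \tilde{k}(\cdot, x) - \Pi_{\nsa, \tilde{V}(X_n)}(\tilde{k}(\cdot, x))$. This yields
\begin{align*}
P_{k, \Omega, X_n}(x) &= \bigl\Vert k(\cdot, x) - \Pi_{\nsb, V(X_n)}(k(\cdot, x)) \bigr\Vert_{\nsb} \\
&= \bigl\Vert \tilde{k}(\cdot, x) - \Pi_{\nsa, \tilde{V}(X_n)}(\tilde{k}(\cdot, x)) \bigr\Vert_{\nsa} = P_{\tilde{k}, \Omegat, X_n}(x),
\end{align*}
which is the claim. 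The argument is essentially a bookkeeping assembly of results already established, so I do not anticipate a genuine obstacle; the only point requiring a little care is to ensure that $\tilde{k}(\cdot, x)$ is legitimately an element of $\nsa$ (so that both \cref{lem:utility_lemma} and the isometry apply to it), which is exactly where the hypothesis $x \in \Omegat$ is used.
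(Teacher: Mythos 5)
Your proposal is correct and follows exactly the paper's own argument: rewrite $k(\cdot,x)$ as $E\tilde{k}(\cdot,x)$, commute the projection with $E$ via Lemma \ref{lem:utility_lemma}, and conclude with the isometry of the embedding operator. No further comment is needed.
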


\begin{proof}
From the definition of the power function in Eq.~\eqref{eq:definition_power_func},  by using the fact that $E\tilde{k}(\cdot, x) = k(\cdot, x)$ for all $x \in 
\Omegat$, and by Lemma \ref{lem:utility_lemma} we can calculate
\begin{align*}
P_{k, \Omega, X_n}(x) =& \Vert k(\cdot, x) - \Pi_{\ns, V(X_n)}(k(\cdot, x)) \Vert_{\nsb} \\
\equiv& \Vert E\tilde{k}(\cdot, x) - \Pi_{\nsb, V(X_n)}(E\tilde{k}(\cdot, x)) \Vert_{\nsb} \\
=& \Vert E\tilde{k}(\cdot, x) - E\Pi_{\nsa, \tilde{V}(X_n)}(\tilde{k}(\cdot, x)) \Vert_{\nsb} \\
=& \Vert \tilde{k}(\cdot, x) - \Pi_{\nsa, \tilde{V}(X_n)}(\tilde{k}(\cdot, x)) \Vert_{\nsa} \\
\equiv& P_{\tilde{k}, \Omegat, X_n}(x),
\end{align*}
which holds for all $x \in \Omegat$.
\end{proof}

We remark that Lemma \ref{lem:utility_lemma} and Lemma \ref{lem:power_funcs_on_diff_domains} can be formulated for general closed subspaces $\tilde{V}_n \subset \nsa$,
$E\tilde{V}_n =: V_n \subset \nsb$ instead of the particular kernel-based subspaces $\tilde{V}(X_n), V(X_n)$ considered here. 

\subsection{Related work}

From a high level point of view, 
our main result (in \Cref{th:main_result} below) allows us to derive (worst-case) error estimates (in the number of interpolation points) for kernel-based interpolation for arbitrary (infinite) sets $\Omegat \subset \R^d$, e.g.\ non-Lipschitz domains. 
This principle holds for general continuous kernels $k$ with $\sup_{x \in \Omega} k(x, x) < \infty$.
Both the error estimates for non-Lipschitz domains as well as for general continuous bounded kernels was not available so far.
Thus here we briefly comment on related work on kernel-based interpolation, 
with a particular focus on the dependency on the underlying domain.

First we note that most worst case error estimates are formulated in terms of the fill distance, which is usually defined as
\begin{align*}
h_{X_n, \Omega} := \sup_{x \in \Omega} \min_{x_j \in X_n} \Vert x - x_j \Vert_2.
\end{align*}
For bounded domains and uniformly distributed points there exist constants $c, C > 0$ such that
\begin{align}
\label{eq:coupling_fill_distance}
c n^{-1/d} \leq h_{X_n, \Omega} \leq C n^{-1/d}.
\end{align}
Using this relationship between the number of points $n$ and the fill distance $h_{X_n, \Omega}$, 
any error estimate expressed in the fill distance can be turned into an error estimate based on the number of points and vice versa.

A first line of research works via the power function of Eq.~\eqref{eq:definition_power_func},
and \citep[Section 4]{schaback1995multivariate} provides a good overview of the used techniques:
\citep{madych1988multivariate, madych1992bounds} consider kernels $k(x, y) := \Phi(\Vert x - y \Vert_2)$ (radial basis function kernels) and derive error estimates by investigating the polynomial expansion of the radial basis function $\Phi$ around zero.
Those references assume $\Omega$ to have a ``sufficiently smooth boundary''.
The article \citep{madych1990multivariate} (for infinitely smooth kernels) as well as \citep{wu1993local} (for finitely smooth kernels) consider kernels $k(x, y) := \Phi(x - y)$ (translational invariant kernels) and derive error estimates with Fourier transform techniques, 
as this is the standard tool for translational invariant problems.
The paper \citep{madych1990multivariate} assumes $\Omega$ to satisfy a cone condition, 
while \citep{wu1993local} does not state to require a cone condition.
They work with a local fill distance, which is defined as
\begin{align*}
h_{\rho, X_n}(x) := \max_{y \in B_\rho(x)} \min_{x_j \in X_n} \Vert y - x_j \Vert_2,
\end{align*}
(using $B_\rho(x) = \{ y \in \R^d ~ | ~ \Vert x - y \Vert_2 < \rho \}$) and is assumed to be locally small enough, i.e.\ ``$h_{\rho, X_n} \leq h_0$''.
However, a close inspection of the interplay of $h_{\rho, X_n}, h_0$ and $\rho$ (see \citep[Eq.~(5.5)]{wu1993local}) reveals that plenty of interpolation points $X_n$ are required in the neighborhood $B_\rho(x)$, 
thus requiring many interpolation points in possible cusps of the domain $\Omega$. 
This then breaks the coupling of the local fill distance to the number of interpolation points of Eq.~\eqref{eq:coupling_fill_distance}.
Therefore, if one wants to obtain convergence rates in terms of interpolation points with the results from \citep{wu1993local}, a cone condition is required.

This is one on the contributions of our work over previous works \citep{madych1988multivariate, madych1992bounds, madych1990multivariate, wu1993local}: 
We obtain convergence rates in terms of the number of points for domains without such cone conditions and also for more general kernels than radial or translational invariant kernels.
This is achieved by a considerably new proof technique.

A second line of research deals with sampling inequalities \citep{narcowich2005sobolev, wendland2005approximate, narcowich2006sobolev}.
In contrast to the power function analysis before, this approach also allows for approximation statements on functions outside the RKHS of the used kernel.
By applying the sampling inequalites to the residual $f - s_n$ and leveraging norm equivalences, namely between the RKHS and corresponding Sobolev spaces,
it is possible to derive error estimates (in terms of the fill distance) for kernel-based interpolation.
In all of those references, the underlying assumption is that $\Omega$ is a compact domain with Lipschitz boundary, 
which is required e.g.\ to ensure a continuous extension operator to $\R^d$.

Some more details on convergence rates for kernel interpolation using the Gaussian kernel are discussed in \Cref{subsubsec:stability_wrt_domain}.

Both the power function analysis as well as the sampling inequality approach are quite constructive.
This is in contrast to our approach, which mostly makes use of abstract approximation theory and leverages greedy kernel analysis to apply this to possibly non-greedy kernel-based approximation.

\section{Analysis of greedy algorithms in an abstract setting} \label{sec:abstract_setting}

We start in \Cref{subsec:abstract_sett_rev} by recalling the abstract analysis of greedy algorithms in Hilbert spaces of \citep{binev2011convergence}, which was later refined in \citep{devore2013greedy}.
This review will serve as background for the refined analysis in \Cref{subsec:abstract_sett_gen}, 
where we both lift unneccessary restrictive assumptions and extend the results.

\subsection{Review of abstract setting} \label{subsec:abstract_sett_rev}

We start by reviewing the framework from \citep{binev2011convergence, devore2013greedy}:
For this let $\Ha$ be a Hilbert space with norm $\Vert \cdot \Vert = \Vert \cdot \Vert_{\Ha}$.
Furthermore let $\F \subset \mathcal{H}$ be a compact (and infinite) subset and assume, for notational convenience only,
that it holds $\Vert f \Vert_{\Ha} \leq 1$ for all $f \in \F$. 
Now a \textit{weak greedy algorithm with constant $\gamma \in (0, 1]$} is considered that selects a sequence $\{ f_0, f_1, ... \} \subset \F$ of elements.
It starts by picking any element $f_0 \in \F$ that satisfies
\begin{align*}
\Vert f_0 \Vert \geq \gamma \cdot \max_{f \in \F} \Vert f \Vert,
\end{align*}
and proceeds by defining $V_n := \Sp \{f_0, ..., f_{n-1} \}$ and selecting the next elements according to 
\begin{align}
\label{eq:greedy_criterion_abstract}
\dist(f_{n}, V_n)_{\Ha} \geq \gamma \cdot \sup_{f \in \F} \dist(f, V_n)_{\Ha},
\end{align}
The resulting sequence $\{ f_0, f_1, ... \} \subset \F$ will not be unique, but the subsequent analysis holds for any sequence that is selected according to this weak selection criterion.

We recall that two important quantities $d_n(\F)_\mathcal{H}$ and $\sigma_n(\F)_\mathcal{H}$ considered in the analysis, 
that are defined as
\begin{align} 
\begin{aligned} \label{eq:quantities}
d_n(\F)_\mathcal{H} &:= \inf_{\substack{Y_n \subset \mathcal{H}\\ \dim(Y_n) = n}} \sup_{f \in \F} \mathrm{dist}(f, Y_n)_\mathcal{H}, \\
\sigma_n(\F)_\mathcal{H} &:= \sup_{f \in \F} \mathrm{dist}(f, V_n)_\mathcal{H}, %
\end{aligned}
\end{align}
where $d_n(\F)_\mathcal{H}$ is the Kolmogorov $n$-width of $\F$ in $\mathcal{H}$.
For the proof of the main result of \citep{devore2013greedy}, a lemma is required which is recalled here:

\begin{lemma}\citep[Lemma 2.1]{devore2013greedy} \label{lem:abstract_lemma}
Let $G = (g_{i,j})_{i,j=1}^K$ be a $K \times K$ lower triangular matrix with rows $\textbf{g}_1, ..., \textbf{g}_K$, 
$W$ be any $m$-dimensional subspace of $\R^K$, and $P$ be the orthogonal projection of $\R^K$ onto $W$.
Then 
\begin{align*}
\prod_{i=1}^K g_{i,i}^2 \leq \left \{ \frac{1}{m} \sum_{i=1}^K \Vert P\textbf{g}_i \Vert_2^2  \right \}^m \left \{ \frac{1}{K-m} \sum_{i=1}^K \Vert \textbf{g}_i - P\textbf{g}_i \Vert_2^2 \right \}^{K-m}
\end{align*}
where $\Vert \cdot \Vert_2$ is the Euclidean norm of a vector in $\R^K$.
\end{lemma}

Then, the main statement \citep[Theorem 3.2]{devore2013greedy} puts the quantities $\sigma_n(\F)_{\Ha}$ and $d_n(\F)_{\Ha}$ from Eq.~\eqref{eq:quantities} into relation:

\begin{theorem}\citep[Theorem 3.2]{devore2013greedy} \label{th:abstract_estimate}
For the weak greedy algorithm with constant $\gamma$ in a Hilbert space $\Ha$ and for any compact set $\F$, we have the following inequalities between $\sigma_n := \sigma_n(\F)_{\Ha}$ and $d_n := d_n(\F)_{\Ha}$, for any $N \geq 0, K \geq 1$, and $1 \leq m < K$:
\begin{align*}
\prod_{i=1}^K \sigma_{N+i}^2 \leq \gamma^{-2K} \left( \frac{K}{m} \right)^m \left( \frac{K}{K-m} \right)^{K-m} \sigma_{N+1}^{2m} d_m^{2K-2m}.
\end{align*}
\end{theorem}

\subsection{Generalization of abstract setting} \label{subsec:abstract_sett_gen}

This subsection generalizes and extends the results recalled in \Cref{subsec:abstract_sett_rev},
especially \Cref{th:abstract_estimate}.
In particular:
\begin{enumerate}
\item We lift the unnecessarily restrictive assumption on $\F \subset \Ha$ being compact and replace it by $\F$ being precompact.
\item We extend the result by introducing an additional subset $\Ft \subset \F$ and analyzing its effect.
\end{enumerate}

Both these points are adressed in our main abstract statement in \Cref{th:abstract_estimate_2} below.

In order to understand why $\F$ being precompact is sufficient, we recall that for the Kolmogorov $n$-widths of a set $\F \subset \Ha$ it holds
\begin{align*}
d_n(\F)_{\Ha} = d_n(\overline{\F})_{\Ha},
\end{align*}
i.e.\ the Kolmogorov widths for a set and for its closure coincide \citep[Theorem 1.1 (page 10)]{pinkus2012n}.
This can also be seen easily from the definition of the Kolmogorov $n$-width in Eq.~\eqref{eq:quantities}, as it already incorporates the supremum $\sup_{f \in \F}$. 
Moreover, the weak selection criteria, i.e. $\gamma \in (0, 1)$, do not require that the supremum $\sup_{f \in \F}$ actually needs to be attained within $\F$.
Beyond these points, the compactness (in conjunction with its boundedness) of $\F$ is only used to have $d_n(\F)_{\Ha} \rightarrow 0$ (see \citep[Prop. 1.2 (page 10)]{pinkus2012n}), 
for which, however, the precompactness of $\F$ is sufficient. 

Regarding the second point, we additionally consider another set $\Ft \subset \F$, i.e.
\begin{align*}
\Ft \subset \F \subset \Ha.
\end{align*}
Now we consider running a greedy algorithm that picks subsequently elements $f_0, f_1, \dots$ from $\Ft$ according to 
\begin{align}
\label{eq:selection_criterion_subset}
f_n \in \Ft ~~ \text{such that} ~~ \dist(f_{n}, V_n)_{\Ha} \geq \gamma \cdot \sup_{f \in \Ft} \dist(f, V_n)_{\Ha}.
\end{align}
It is crucial to point out that the elements are selected from $\Ft \subset \F$ according to the largest error (measured as distance in $\Ha$) which is attained for the elements in $\Ft$. 

Like this we are ready to formulate our main abstract result in \Cref{th:abstract_estimate_2}, 
which is a slight modification of \citep[Theorem 3.2]{devore2013greedy}.
A full proof is provided.

\begin{theorem} \label{th:abstract_estimate_2}
Consider a precompact set $\F$ in a separable Hilbert space $\mathcal{H}$
and a subset $\Ft \subseteq \F$ that contains infinitely many elements.
Consider a greedy algorithm that selects pairwise different elements $\{ f_0, f_1, ... \}$ from $\Ft \subseteq \F$ according to Eq.~\eqref{eq:selection_criterion_subset}.
We have the following inequalities between $\sigma_n(\Ft)$ and $d_n(\F)$ for any $N \geq 0, K \geq 1, 1 \leq m < K$:
\begin{align}
\label{eq:abstract_theorem}
\prod_{i=1}^K &\sigma_{N+i}(\Ft)_{\Ha}^2 \leq \gamma^{-2K} \left( \frac{K}{m} \right)^m \left( \frac{K}{K-m} \right)^{K-m} \sigma_{N+1}(\Ft)_{\Ha}^{2m} d_m(\F)_{\Ha}^{2K-2m}.
\end{align}
\end{theorem}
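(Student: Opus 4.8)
**

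The plan is to adapt the proof of \cite[Theorem 3.2]{DeVore2013} with the minimal modifications needed to account for the fact that the greedy elements are drawn from $\Ft$ while the Kolmogorov width involves the larger set $\F$. The key structural observation is that, because the selection rule \eqref{eq:selection_criterion_subset} chooses $f_n \in \Ft$ maximizing $\dist(\cdot, V_n)_{\Ha}$ over $\Ft$, every quantity $\sigma_{n}(\Ft)_{\Ha}$ is realized as $\dist(f_n, V_n)_{\Ha}$ with $f_n \in \Ft \subset \F$, so the Gram-matrix argument of Binev--DeVore applies verbatim to the finite collection $\{f_0,\dots,f_{N+K}\}$. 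The only place where $\F$ (as opposed to $\Ft$) enters is the comparison with the $n$-width, where one uses that each $f_i \in \F$ and hence $\dist(f_i, Y)_{\Ha} \le d_m(\F)_{\Ha}$ for the optimal (or near-optimal) $m$-dimensional subspace $Y$.

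Concretely, I would proceed as follows. First, fix $N$ and $K$ and set $a_i := \sigma_{N+i}(\Ft)_{\Ha}$ for $i = 1, \dots, K$; by monotonicity of $\sigma_n$ in $n$ these are non-increasing. Following Binev--DeVore, introduce the Gram matrix $G$ of the (orthogonalized) increments $g_i := f_{N+i} - \Pi_{\Ha, V_{N+i}}(f_{N+i})$ for $i = 1,\dots,K$; then $\|g_i\|_{\Ha}^2 = a_i^2$, the $g_i$ are mutually orthogonal so $\det G = \prod_{i=1}^K a_i^2$, and this equals $\prod_{i=1}^K \sigma_{N+i}(\Ft)_{\Ha}^2$, the left side of \eqref{eq:abstract_theorem}. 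Second, I would bound $\det G$ from above. Writing each $f_{N+i}$ in terms of its projection onto a near-optimal $m$-dimensional subspace $Y_m$ for $\F$ (legitimate since $f_{N+i} \in \Ft \subset \F$), one splits the Gram determinant via the Cauchy--Binet / sub-multiplicativity of determinants into a part controlled by the $m$-dimensional approximation — contributing factors of $d_m(\F)_{\Ha}^2$ — and a part controlled by $\sigma_{N+1}(\Ft)_{\Ha}^2$, using that $a_i \le a_1 = \sigma_{N+1}(\Ft)_{\Ha}$ for all $i$. Third, the combinatorial factor $\left(\frac{K}{m}\right)^m\left(\frac{K}{K-m}\right)^{K-m}$ emerges exactly as in \cite{DeVore2013} from optimizing the bound on the product of diagonal-type terms (an arithmetic--geometric mean estimate applied to the $K$ factors, $m$ of which are estimated by $\sigma_{N+1}$ and $K-m$ by $d_m$), and setting $\gamma = 1$ there.

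The main obstacle — really the only nontrivial point — is making sure the Binev--DeVore Gram-matrix estimate still goes through when the greedy maximization is over the \emph{proper} subset $\Ft$ rather than over $\F$ itself. The subtlety is that their argument uses, at one step, that $\dist(f_{N+j}, V_{N+i})_{\Ha} \le \dist(f_{N+i}, V_{N+i})_{\Ha} = a_i$ for $j \ge i$, i.e.\ that a \emph{later-picked} greedy element was available but not chosen at an earlier stage; this remains true here because $f_{N+j} \in \Ft$ for all $j$, so it was a competitor in the $\argmax$ over $\Ft$ at every earlier step. Hence $g_i$ has $\Ha$-norm $a_i$ and its components along the directions $g_1,\dots,g_{i-1}$ are each bounded by $a_i$ as well, which is precisely what is needed. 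Once this is checked, the replacement of $d_m(\F)$ by $d_m(\F)$ (not $d_m(\Ft)$) on the right-hand side is automatic and in fact strengthens the statement, since $\Ft \subset \F$ gives $d_m(\Ft)_{\Ha} \le d_m(\F)_{\Ha}$. I would therefore present the proof as: (i) recall the setup and reduce to the Gram determinant; (ii) verify the key inequality above for elements of $\Ft$; (iii) quote the remaining algebra from \cite{DeVore2013} with $\gamma = 1$, indicating where $\F$ versus $\Ft$ enters.
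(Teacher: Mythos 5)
Your proposal is correct and follows essentially the same route as the paper: both reuse the argument of \cite[Theorem 3.2]{DeVore2013} verbatim and isolate exactly the two points where the two sets enter, namely that every greedy element lies in $\Ft$ (so the later-picked elements were competitors in the $\argmax$ at earlier steps, giving $\dist(f_{N+j}, V_{N+i})_{\Ha} \leq \sigma_{N+i}(\Ft)_{\Ha}$ for $j \geq i$ and the diagonal identification $a_{N+i,N+i} = \sigma_{N+i}(\Ft)_{\Ha}$ with $\gamma = 1$), while the comparison with the $n$-width uses only $f_i \in \F$ and hence yields $d_m(\F)_{\Ha}$ on the right-hand side. The paper's proof is just a terser version of the same two observations.
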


\begin{proof}
We proceed similarly to the proof of \citep[Theorem 3.2]{devore2013greedy}:
As $\Ha$ is separable, we assume without loss of generality that $\Ha$ is $\ell^2(\N \cup \{0\})$.
For the infinite sequence $(f_n)_{n \geq 0} \subseteq \Ft \subset \Ha$ consisting of pairwise different elements, we consider the orthonormal system $(f_n^*)_{n \geq 0}$ obtained by Gram-Schmidt orthogonalization.
For the orthogonal projector $P_n: \Ha \rightarrow V_n \equiv \Sp \{f_0, ..., f_{n-1} \}$ it then holds
\begin{align*}
P_n f = \sum_{i=0}^{n-1} \langle f, f_i^* \rangle f_i^*.
\end{align*}
Especially $f_n$ can be expressed in this orthogonal basis, and we collect the coefficients in an (infinite dimensional) lower triangular matrix $A$:
\begin{align*}
A := (a_{i,j})_{i,j=0}^\infty, ~ a_{i,j} := \langle f_i, f_j^* \rangle_{\Ha}.
\end{align*}
Now we consider the $K \times K$ matrix $G = (g_{i,j})$ which is formed by the rows and columns of $A$ with indices from $\{ N+1, ..., N+K\}$. 
Each row $\textbf{g}_i$ is the restriction of $f_{N+i}$ to the coordinates $N+1, ..., N+K$.
Let $\Ha_m$ be the $m$-dimensional so-called Kolmogorov subspace of $\Ha$ for which $\dist(\F, \Ha_m) = d_m(\F)$.
Then, $\dist(f_{N+i}, \Ha_m) \leq d_m(\F)$, $i=1, ..., K$.
Let $\tilde{W}$ be the linear space which is the restriction of $\Ha_m$ to the coordinates $N+1, ..., N+K$.
In general, $\dim(\tilde{W}) \leq m$. 
Let $W$ be an $m$-dimensional space, $W \subset \Sp \{e_{N+1}, ..., e_{N+K} \}$, such that $\tilde{W} \subset W$ and $P$ and $\tilde{P}$ are the projections from $\R^K$ onto $W$ and $\tilde{W}$, respectively.

As the selection criterion chooses only elements from $\Ft$, it holds $f_i \in \Ft$ for all $i=0, 1, ...$, and thus
\begin{align*}
\Vert P \textbf{g}_i \Vert_{\ell^2}^2 &\leq \Vert \textbf{g}_i \Vert_{\ell^2}^2 = \Vert f_{N+i} - \Pi_{V_{N+1}}(f_{N+i}) \Vert_{\Ha}^2 \\
&\leq \sup_{f \in \Ft} \Vert f - \Pi_{V_{N+1}}(f) \Vert_{\Ha}^2 = \sigma_{N+1}(\Ft)_{\Ha}^2 \quad \forall m \geq n.
\end{align*}
Furthermore,
\begin{align*}
\Vert \textbf{g}_i - P\textbf{g}_i \Vert_{\ell^2} &\leq \Vert \textbf{g}_i - \tilde{P} \textbf{g}_i \Vert_{\ell^2} = \dist(\textbf{g}_i, \tilde{W}) \\
&\leq \dist(f_{N+i}, \Ha_m) \leq d_m(\F), \quad i=1, ..., K.
\end{align*}
Finally we obtain
\begin{align*}
g_{i,i} \equiv a_{N+i, N+i} &\hspace{.35cm}\equiv \Vert f_{N+i} - \Pi_{V_{N+i}} f_{N+i} \Vert_{\Ha} = \dist(f_{N+i}, V_{N+i})_\Ha \\
&\stackrel{\text{Eq.\ }\eqref{eq:selection_criterion_subset}}{\geq} \gamma \cdot \sigma_{N+i}(\Ft)_{\Ha}.
\end{align*}
Now \Cref{lem:abstract_lemma} can be leveraged:
\begin{align*}
\prod_{i=1}^K g_{i,i}^2 &\leq \left \{ \frac{1}{m} \sum_{i=1}^K \Vert P\textbf{g}_i \Vert_2^2  \right \}^m \left \{ \frac{1}{K-m} \sum_{i=1}^K \Vert \textbf{g}_i - P\textbf{g}_i \Vert_2^2 \right \}^{K-m} \\
\Rightarrow \quad \prod_{i=1}^K \gamma^2 \sigma_{N+i}(\Ft)^2 &\leq \left \{ \frac{1}{m} \sum_{i=1}^K \sigma_{N+1}(\Ft)_{\Ha}^2  \right \}^m \left \{ \frac{1}{K-m} \sum_{i=1}^K d_m(\F)^2 \right \}^{K-m} \\
\Leftrightarrow \qquad \prod_{i=1}^K \sigma_{N+i}(\Ft)^2 &\leq \gamma^{-2K} \left( \frac{K}{m} \right)^m \left( \frac{K}{K-m} \right)^{K-m} \sigma_{N+1}(\Ft)^{2m} d_m(\F)^{2K-2m}.
\end{align*}
\end{proof}

Similarly to \citep[Corollary 3.3]{devore2013greedy}, we can derive decay statements for $\sigma_n(\Ft)_{\Ha}$ based on decay statements for $d_n(\F)_{\Ha}$ due to 
Theorem \ref{th:abstract_estimate_2}. This is summarized in the following corollary.

\begin{cor} \label{cor:decay_abstract_setting_prod_2} 
For the greedy algorithm of Eq.~\eqref{eq:selection_criterion_subset} we have the following.
\begin{enumerate}[label=\roman*)]
\item If $d_n(\F)_{\Ha} \leq C_0 n^{-\alpha}, n=1, 2, \dots$, then it holds
\begin{align} \label{eq:estimate_sigma_alg}
\sigma_n(\Ft)_{\Ha} \leq C_1 n^{-\alpha},
\end{align}
for $n = 1, 2, \dots$ with $C_1 := 2^{5\alpha + 1} C_0 \gamma^{-2}$.
\item If $d_n(\F)_{\Ha} \leq C_0 e^{-c_0 n^\alpha}, n=1,2,\dots$, then it holds
\begin{align} \label{eq:estimate_sigma_exp}
\sigma_n(\Ft)_{\Ha} \leq \sqrt{2 \tilde{C}_0} \gamma^{-1} e^{-c_1 n^\alpha}
\end{align}
for $n=2, 3, \dots$ with $\tilde{C}_0 := \max \{ 1, C_0 \}$ and $c_1 = 2^{-(2+\alpha)}c_0 < c_0$.
\end{enumerate}
\end{cor}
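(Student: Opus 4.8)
The plan is to deduce Corollary~\ref{cor:decay_abstract_setting_prod_2} directly from Theorem~\ref{th:abstract_estimate_2}, mimicking the derivation of \cite[Corollary~3.3]{DeVore2013} but keeping careful track of the fact that on the left-hand side we have $\sigma_n(\Ft)_\Ha$ while on the right-hand side $d_m(\F)_\Ha$ appears. The key structural inequality is \eqref{eq:abstract_theorem}: for any $N\ge 0$, $K\ge 1$, $1\le m<K$,
\begin{align*}
\prod_{i=1}^K \sigma_{N+i}(\Ft)_\Ha^2 \le \left(\frac{K}{m}\right)^m\left(\frac{K}{K-m}\right)^{K-m}\sigma_{N+1}(\Ft)_\Ha^{2m}\, d_m(\F)_\Ha^{2K-2m}.
\end{align*}
Since $\sigma_n(\Ft)_\Ha$ is non-increasing in $n$ (the subspaces $V_n$ are nested), the left-hand product is bounded below by $\sigma_{N+K}(\Ft)_\Ha^{2K}$. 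Thus
\begin{align*}
\sigma_{N+K}(\Ft)_\Ha^{2K}\le \left(\frac{K}{m}\right)^m\left(\frac{K}{K-m}\right)^{K-m}\sigma_{N+1}(\Ft)_\Ha^{2m}\, d_m(\F)_\Ha^{2K-2m}.
\end{align*}
This is the exact analogue of the starting inequality in \cite{DeVore2013}, with $\sigma$-quantities referring to $\Ft$ and $d$-quantities to $\F$; importantly, this replacement is harmless because the algebraic manipulations that follow never convert a $\sigma(\F)$ into a $d(\F)$ — they only use monotonicity of $\sigma_n(\Ft)$ and the decay hypothesis on $d_m(\F)$.

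For part~(i), I would follow the standard dyadic bootstrapping: choose $N=K=n$ (so we look at $\sigma_{2n}$) and $m=\lceil n/2\rceil$ or similar, so that the combinatorial prefactor $(K/m)^m (K/(K-m))^{K-m}$ is bounded by a constant of the form $2^{cK}$, and the exponents split the polynomial decay favourably. Feeding in $d_m(\F)_\Ha\le C_0 m^{-\alpha}$ and $\sigma_{N+1}(\Ft)_\Ha\le \sigma_1(\Ft)_\Ha\le 1$, taking the $2K$-th root, and using $2n$-vs-$n$ comparisons to pass from even indices to all indices, yields $\sigma_n(\Ft)_\Ha\le C_1 n^{-\alpha}$. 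Tracking the constants through the $2^{cK/2K}=2^{c/2}$ factors and the index-halving steps produces the stated $C_1=2^{5\alpha+1}C_0$; this is a routine but slightly delicate bookkeeping exercise, essentially identical to \cite[Corollary~3.3]{DeVore2013}. For part~(ii), the exponential case, I would again set $N=K=n$, pick $m=\lceil \theta n\rceil$ for a suitable fixed fraction $\theta\in(0,1)$, bound the prefactor by $e^{cn}$, and observe that $d_m(\F)_\Ha^{2K-2m}\le C_0^{2K-2m} e^{-2c_0 m^\alpha(K-m)}$; after taking the $2K$-th root the dominant term is $\exp(-c_0 m^\alpha (K-m)/K)\sim \exp(-c_0\theta^\alpha(1-\theta) n^\alpha)$, and optimizing/fixing $\theta$ (together with the $e^{cn}$ prefactor, which is lower-order relative to... no — one must be careful: $e^{cn}$ is \emph{not} lower order than $e^{-c_1 n^\alpha}$ when $\alpha<1$, so the prefactor must instead be absorbed by choosing the comparison carefully, exactly as in \cite{DeVore2013}, where the trick is that the prefactor contributes only $e^{c}$ after the root because the exponent $K$ cancels). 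Collecting terms gives $\sigma_n(\Ft)_\Ha\le \sqrt{2C_0}\,e^{-c_1 n^\alpha}$ with $c_1=2^{-1-2\alpha}c_0$.

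The main obstacle is not conceptual but bookkeeping: one must verify that every step in the proof of \cite[Corollary~3.3]{DeVore2013} uses only (a) monotonicity of $\sigma_n(\Ft)_\Ha$ and (b) the hypothesis on $d_m(\F)_\Ha$, and never needs an inequality relating $\sigma_n(\F)_\Ha$ and $d_n(\F)_\Ha$ for the \emph{same} set — otherwise the two-set version would break. Since the selection criterion \eqref{eq:selection_criterion_subset} guarantees all chosen $f_i$ lie in $\Ft$, and Theorem~\ref{th:abstract_estimate_2} already encapsulates this, the substitution goes through verbatim, and the explicit constants $C_1=2^{5\alpha+1}C_0$ and $c_1=2^{-1-2\alpha}c_0$ come out exactly as in the original reference.
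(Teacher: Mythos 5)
Your proposal is correct and follows essentially the same route as the paper: both reduce the corollary to the purely algebraic derivation of \cite[Corollary 3.3]{DeVore2013} applied to the two-set inequality of Theorem~\ref{th:abstract_estimate_2}, observing that the argument only uses monotonicity of $\sigma_n(\Ft)_{\Ha}$ and the decay hypothesis on $d_m(\F)_{\Ha}$, never an identity tying $\sigma_n$ and $d_n$ to the same set. The paper states this more tersely (``the proof is purely algebraical''), while you additionally sketch the choice of $N,K,m$ and the constant bookkeeping, but the content is the same.
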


\begin{proof}
For the first statement i), 
we can directly reuse \citep[Proof of Corollary 3.3]{devore2013greedy}. 
The only difference is that we have $\sigma_n(\Ft)_{\Ha}$ instead of $\sigma_n(\F)_{\Ha}$, 
which does not matter since the proof of \citep[Corollary 3.3]{devore2013greedy} is purely algebraical. 

For the second statement ii), we proceed similarly, however we note that \citep[Proof of Corollary 3.3]{devore2013greedy} is slightly inaccurate, 
because its Eq.~(3.9) uses 
\begin{align*}
\sigma_{2n+1} \leq \sqrt{2C_0} \gamma^{-1} e^{-c_0 2^{-1-\alpha} (2n)^\alpha},
\end{align*}
however 
\begin{align*}
\sigma_{2n+1} \leq \sqrt{2C_0} \gamma^{-1} e^{-c_0 2^{-1-\alpha} (2n+1)^\alpha},
\end{align*}
would be required.
Therefore we include the full proof:
We leverage \Cref{th:abstract_estimate_2} for $N = 0, K = n$, 
thus Eq.~\eqref{eq:abstract_theorem} turns into
\begin{align*}
\prod_{i=1}^n &\sigma_{i}(\Ft)_{\Ha}^2 \leq \gamma^{-2n} \left( \frac{n}{m} \right)^m \left( \frac{n}{n-m} \right)^{n-m} \sigma_{1}(\Ft)_{\Ha}^{2m} d_m(\F)_{\Ha}^{2n-2m}.
\end{align*}
For $0 < x := m/n < 1$ it holds $x^{-x} (1-x)^{x-1} \leq x$, thus we have
\begin{align*}
\left[ \left( \frac{n}{m} \right)^m \left( \frac{n}{n-m} \right)^{n-} \right]^{1/n} = x^{-x}(1-x)^{x-1} \leq 2.
\end{align*}
Using $\sigma_1(\Ft)_{\Ha} \leq 1$ due to $\Vert f \Vert_{\Ha} \leq 1$ for all $f \in \F$ we then obtain
\begin{align*}
\left( \prod_{i=1}^n \sigma_{i}(\Ft)_{\Ha} \right)^{1/n} \leq \sqrt{2} \gamma^{-1} d_m(\F)_{\Ha}^{(n-m)/n}.
\end{align*}
Finally using the monotonicity of $\sigma_i(\Ft)_{\Ha}$ and picking $m = \lceil n/2 \rceil$ gives 
\begin{align}
\label{eq:step_within_proof}
\sigma_n(\Ft)_{\Ha} &\leq \sqrt{2} \gamma^{-1} \cdot d_m(\F)^{(n-m)/n} = \sqrt{2} \cdot d_{\lceil n/2 \rceil}(\F)^{(n-\lceil n/2 \rceil)/n} \\
&\leq \sqrt{2} \gamma^{-1} \cdot \tilde{C}_0^{1/2} e^{-c_0 \lceil n/2 \rceil^\alpha \cdot (n-\lceil n/2 \rceil)/n} \notag \\
&\leq \sqrt{2} \gamma^{-1} \cdot \tilde{C}_0^{1/2} e^{-c_0 2^{-\alpha} n^\alpha \cdot (n-\lceil n/2 \rceil)/n} \notag \\
&\stackrel{n \geq 2}{\leq} \sqrt{2} \gamma^{-1} \cdot \tilde{C}_0^{1/2} e^{-c_0 2^{-2-\alpha} n^\alpha} \notag \\
&\equiv \sqrt{2 \tilde{C}_0}\gamma^{-1} e^{-c_1 n^\alpha}. \notag 
\end{align}
\end{proof}

\section{Kernel interpolation on arbitrary domains} \label{sec:kernel_interpolation}

We start in \Cref{subsec:convenient_connection} by recalling a convenient connection of the abstract analysis of Section \ref{sec:abstract_setting} to greedy kernel 
interpolation which was introduced in \citep{santin2017convergence} and subsequently leveraged as well in \citep{wenzel2021novel, wenzel2022analysis, wenzel2022adaptive}. 
By doing so, we are able to derive our main result in \Cref{th:main_result} in \Cref{subsec:main_result}. 
In \Cref{subsec:application_main_result}, several applications of that main result are 
given.

\subsection{Convenient connection} \label{subsec:convenient_connection}

We make use of the convenient connection between the abstract setting and the kernel setting as introduced in \citep{santin2017convergence}, and extend it to the consideration 
of two sets $\tilde{\Omega} \subset \Omega \subset \R^d$ and choose
\begin{align}
\label{eq:connection}
\begin{aligned}
\Ha &:= \nsb, \\
\Ft &:= \{ k(\cdot, x) , x \in \tilde{\Omega} \} \subset \Ha, \\
\F &:= \{ k(\cdot, x) , x \in \Omega \} \subset \Ha,
\end{aligned}
\end{align}
for some continuous kernel $k: \Omega \times \Omega \rightarrow \R$ such that $k(x, x) \leq 1$ (in order to satisfy the convenience assumption $\Vert f 
\Vert_{\Ha} \leq 1 ~ \forall f \in \F$).
As long as $\sup_{x \in \Omega} k(x, x) < \infty$, this can always be enforced by normalizing the kernel.
Thus we have 
\begin{align*}
V_n &= \Sp \{ f_0, ..., f_{n-1} \} \\
&= \Sp \{ k(\cdot, x_1), ..., k(\cdot, x_n) \} \\
&= \Sp \{ k(\cdot, x_i ), x_i \in X_n \} = V(X_n),
\end{align*}
for $X_n \subset \tilde{\Omega}$. 
With these choices we can relate the quantities from Eq.~\eqref{eq:quantities} from the abstract setting to the kernel setting as done in \citep{santin2017convergence}:
\begin{align}
\sigma_n(\F)_{\Ha} & \equiv \sup_{f \in \mathcal{F}} \mathrm{dist}(f, V_n)_\mathcal{H} = \sup_{f \in \mathcal{F}} \Vert f - 
\Pi_{V_n}(f) \Vert_{\mathcal{H}} \nonumber \\
&~= \sup_{x \in \Omega} \Vert k(\cdot, x) - \Pi_{\nsb, V(X_n)}(k(\cdot, x)) \Vert_{\nsb} 
= \Vert P_{k, \Omega, X_n} \Vert_{L^\infty(\Omega)},  \label{eq:connection_1} \\
d_n(\F)_{\Ha} &\equiv \inf_{Y_n \subset \mathcal{H}} \sup_{f \in \mathcal{F}} \mathrm{dist}(f, Y_n)_\mathcal{H}  = \inf_{Y_n 
\subset \mathcal{H}} \sup_{f \in \mathcal{F}} \Vert f - \Pi_{Y_n}(f) \Vert_\mathcal{H} \notag \\
&\leq \inf_{Y_n \subset \mathcal{F}} \sup_{f \in \mathcal{F}} \Vert f - \Pi_{Y_n}(f) \Vert_{\ns} = \inf_{X_n \subset \Omega} \Vert P_{k, \Omega, X_n} 
\Vert_{L^\infty(\Omega)} \label{eq:connection_2}.
\end{align}
For brevity, in Eq.~\eqref{eq:connection_2} and in the following we suppress the dimension and size constraints $\dim(Y_n) = n$ and $|X_n| = n$.
The precompactness of $\F$ is equivalent to $d_n(\F)_{\Ha} \stackrel{n \rightarrow \infty}{\longrightarrow} 0$ (see e.g.~\citep[Prop.\ 1.2 (page 10)]{pinkus2012n} together with the comments on \citep[Th.\ 1.1 i) (page 10)]{pinkus2012n}) and thus, due to 
Eq.~\eqref{eq:connection_2}, $\inf_{X_n \subset \Omega} \Vert P_{k, \Omega, X_n} \Vert_{L^\infty(\Omega)}$ $\stackrel{n \rightarrow \infty}{\longrightarrow} 0$ 
implies precompactness of $\F$.

In order to make use of the abstract result of Theorem \ref{th:abstract_estimate_2} and the resulting decay statements of Corollary 
\ref{cor:decay_abstract_setting_prod_2}, we need to understand the quantity $\sigma_n(\Ft)_{\Ha}$ which was defined in Eq.~\eqref{eq:quantities}. We have the 
following lemma:
\begin{lemma}
\label{lem:sigma_equals_powerfunc}
It holds
\begin{align*}
\sigma_n(\Ft)_\Ha = \Vert P_{\tilde{k}, \Omegat, X_n} \Vert_{L^\infty(\Omegat)}.
\end{align*}
\end{lemma}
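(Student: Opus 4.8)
The plan is to unwind both sides through the definitions established earlier in the excerpt and then invoke Lemma~\ref{lem:power_funcs_on_diff_domains} to close the gap. First I would write out the left-hand side: by the definition in Eq.~\eqref{eq:quantities} together with the identifications $\Ha = \nsb$ and $\Ft = \{k(\cdot,x) : x \in \Omegat\}$ from Eq.~\eqref{eq:connection}, we have
\begin{align*}
\sigma_n(\Ft)_\Ha = \sup_{f \in \Ft} \dist(f, V_n)_\Ha = \sup_{x \in \Omegat} \Vert k(\cdot,x) - \Pi_{\nsb, V(X_n)}(k(\cdot,x)) \Vert_{\nsb},
\end{align*}
where I also use that $V_n = V(X_n)$ for the points $X_n \subset \Omegat$ selected by the greedy rule. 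This rewriting is exactly parallel to the computation of $\sigma_n(\F)_\Ha$ in Eq.~\eqref{eq:connection_1}, only with the supremum restricted to $\Omegat$ rather than $\Omega$.

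Second, I would recognize the expression under the supremum as the power function on the big domain: by the second form of the power function in Eq.~\eqref{eq:definition_power_func} we have $\Vert k(\cdot,x) - \Pi_{\nsb, V(X_n)}(k(\cdot,x)) \Vert_{\nsb} = P_{k,\Omega,X_n}(x)$, so
\begin{align*}
\sigma_n(\Ft)_\Ha = \sup_{x \in \Omegat} P_{k, \Omega, X_n}(x) = \Vert P_{k, \Omega, X_n} \Vert_{L^\infty(\Omegat)}.
\end{align*}
Third, since $X_n \subset \Omegat$ by construction, Lemma~\ref{lem:power_funcs_on_diff_domains} applies and gives $P_{k,\Omega,X_n}(x) = P_{\tilde{k}, \Omegat, X_n}(x)$ for all $x \in \Omegat$. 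Taking the $L^\infty(\Omegat)$ norm of both sides yields $\Vert P_{k,\Omega,X_n} \Vert_{L^\infty(\Omegat)} = \Vert P_{\tilde{k}, \Omegat, X_n} \Vert_{L^\infty(\Omegat)}$, which chains together with the previous displays to give the claim.

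I do not expect any serious obstacle here — the lemma is essentially bookkeeping, assembling Eq.~\eqref{eq:connection}, Eq.~\eqref{eq:definition_power_func}, and Lemma~\ref{lem:power_funcs_on_diff_domains}. The only point that needs a word of care is the implicit assumption that the greedy points $X_n$ actually lie in $\Omegat$; this is guaranteed by the selection rule in Eq.~\eqref{eq:selection_criterion_subset}, which only ever picks elements of $\Ft$, i.e.\ kernel translates centered in $\Omegat$. It is worth stating this explicitly so that the hypothesis $X_n \subset \Omegat$ of Lemma~\ref{lem:power_funcs_on_diff_domains} is visibly met.
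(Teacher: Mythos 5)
Your proposal is correct and follows essentially the same route as the paper's proof: unwind $\sigma_n(\Ft)_\Ha$ via Eq.~\eqref{eq:quantities} and Eq.~\eqref{eq:connection}, identify the resulting expression with $P_{k,\Omega,X_n}$ via Eq.~\eqref{eq:definition_power_func}, and finish with Lemma~\ref{lem:power_funcs_on_diff_domains}. Your explicit remark that $X_n \subset \Omegat$ is guaranteed by the selection rule \eqref{eq:selection_criterion_subset} is a sensible point of care that the paper leaves implicit.
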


\begin{proof}
By using the definition of $\sigma_n(\Ft)_{\Ha}$ from Eq.~\eqref{eq:quantities}, our choice of $\Ft$ and $\Ha$ in Eq.~\eqref{eq:connection}, and Lemma 
\ref{lem:power_funcs_on_diff_domains}, we have
\begin{align*}
\sigma_n(\Ft)_{\Ha} &= \sup_{f \in \Ft} \mathrm{dist}(f, V_n)_\mathcal{\Ha} = \sup_{f \in \Ft} \Vert f - \Pi_{V_n}(f) \Vert_\mathcal{\Ha} \\
&= \sup_{x \in \Omegat} \Vert k(\cdot, x) - \Pi_{\nsb, V(X_n)}(k(\cdot, x)) \Vert_{\nsb} \\
&= \sup_{x \in \Omegat} P_{k, \Omega, X_n}(x) \\
&= \sup_{x \in \Omegat} P_{\tilde{k}, \Omegat, X_n}(x) = \Vert P_{\tilde{k}, \Omegat, X_n} \Vert_{L^\infty(\Omegat)},
\end{align*}
which concludes the proof.
\end{proof}

Furthermore the weak greedy selection criterion from Eq.~\eqref{eq:selection_criterion_subset} results in
\begin{align}
\label{eq:selection_criterion_subset_2}
x_{n+1} \in \Omegat ~~ \text{such that} ~~ P_{\tilde{k}, \Omegat, X_n}(x_{n+1}) \geq \gamma \cdot \Vert P_{\tilde{k}, \Omegat, X_n} \Vert_{L^\infty(\Omegat)},
\end{align}
i.e.\ a weak $P$-greedy algorithm on $\Omegat$.

\subsection{Main result} \label{subsec:main_result}

Corollary \ref{cor:decay_abstract_setting_prod_2} in conjunction with Lemma \ref{lem:sigma_equals_powerfunc} now immediately yields the following theorem.

\begin{theorem}
\label{th:main_result}
Let $\Omegat \subset \R^d$ be an arbitrary infinite set.
If there exists a superset $\Omega \supset \Omegat$ and a sequence of (non-necessarily nested) sets of points $(X_n)_{n \in \N} \subset \Omega$ such that for 
some $\alpha > 0$ it holds
\begin{enumerate}
\item (Algebraic decay case)
\begin{align}
\label{eq:assumption_alg_decay}
\Vert f - \Pi_{\nsb, V(X_n)} f \Vert_{L^\infty(\Omega)} \leq C_0 n^{-\alpha }\cdot \Vert f \Vert_{\nsb} \quad \forall f \in \nsb,
\end{align}
then the weak $P$-greedy algorithm with parameter $\gamma \in (0, 1]$ using $\tilde{k}$ applied to $\Omegat$ (see Eq.~\eqref{eq:selection_criterion_subset_2}) gives a nested sequence of sets of points $(\tilde{X}_n)_{n \in \N} \subset \Omegat$ such 
that
\begin{align*}
\Vert f - \Pi_{\nsa, \tilde{V}(\tilde{X}_n)} f \Vert_{L^\infty(\Omegat)} \leq C_1 n^{-\alpha }\cdot \Vert f \Vert_{\nsa} \quad \forall f \in \nsa
\end{align*}
for $n=1, 2, ...$ with $C_1 := 2^{5\alpha + 1} C_0 \gamma^{-2}$.
\item (Exponential decay case)
\begin{align}
\label{eq:assumption_exp_decay}
\Vert f - \Pi_{\nsb, V(X_n)} f \Vert_{L^\infty(\Omega)} \leq C_0 e^{-c_0 n^{\alpha}} \cdot \Vert f \Vert_{\nsb} \quad \forall f \in \nsb,
\end{align}
then the weak $P$-greedy algorithm with parameter $\gamma \in (0, 1]$ using $\tilde{k}$ applied to $\Omegat$ (see Eq.~\eqref{eq:selection_criterion_subset_2}) gives a nested sequence of sets of points $(\tilde{X}_n)_{n \in \N} \subset \Omegat$ such that 
\begin{align*}
\Vert f - \Pi_{\nsa, \tilde{V}(\tilde{X}_n)} f \Vert_{L^\infty(\Omegat)} \leq \sqrt{2 \tilde{C}_0} \gamma^{-1} e^{-c_1 n^\alpha}  \cdot \Vert f \Vert_{\nsa} \quad \forall f 
\in \nsa
\end{align*}
for $n=2, 3, ...$ with $\tilde{C}_0 = \max \{ 1, C_0 \}$ and $c_1 = 2^{-(2+\alpha)} c_0 < c_0$.
\end{enumerate}
\end{theorem}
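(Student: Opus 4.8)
The plan is to combine the two ingredients that have already been assembled in the excerpt: the abstract decay statements of Corollary~\ref{cor:decay_abstract_setting_prod_2} and the identification of $\sigma_n(\Ft)_{\Ha}$ with the power function norm on $\Omegat$ from Lemma~\ref{lem:sigma_equals_powerfunc}. The statement is essentially a translation of the abstract corollary into kernel-interpolation language, so the proof should be short.

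First I would set up the convenient connection of Subsection~\ref{subsec:convenient_connection}: given the arbitrary $\Omegat$, the hypothesized superset $\Omega$ and continuous strictly positive definite kernel $k$ (after rescaling so that $k(x,x)\le 1$, which does not affect the RKHS up to norm equivalence and hence not the convergence rate), put $\Ha := \nsb$, $\F := \{k(\cdot,x) : x\in\Omega\}$, $\Ft := \{k(\cdot,x): x\in\Omegat\}$. Next I would observe that the hypothesis \eqref{eq:assumption_alg_decay} (resp.\ \eqref{eq:assumption_exp_decay}) is precisely a statement about $\Vert P_{k,\Omega,X_n}\Vert_{L^\infty(\Omega)}$: indeed, by the first line of \eqref{eq:definition_power_func}, $\Vert f - \Pi_{\nsb,V(X_n)}f\Vert_{L^\infty(\Omega)} \le \Vert P_{k,\Omega,X_n}\Vert_{L^\infty(\Omega)} \Vert f\Vert_{\nsb}$ and the two are in fact equal by taking the sup over $f$; combined with \eqref{eq:connection_2}, the existence of such a sequence $(X_n)$ yields $d_n(\F)_{\Ha} \le \inf_{X_n\subset\Omega}\Vert P_{k,\Omega,X_n}\Vert_{L^\infty(\Omega)} \le C_0 n^{-\alpha}$ (resp.\ $\le C_1 e^{-C_2 n^\alpha}$). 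In particular $d_n(\F)_{\Ha}\to 0$, so $\F$ is compact in $\Ha$ and Corollary~\ref{cor:decay_abstract_setting_prod_2} applies.

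Then I would note that the $P$-greedy algorithm for $\tilde k$ on $\Omegat$, namely $x_{n+1} = \argmax_{x\in\Omegat} P_{\tilde k,\Omegat,\tilde X_n}(x)$, coincides with the abstract greedy selection rule \eqref{eq:selection_criterion_subset} applied to $\Ft$: by Lemma~\ref{lem:power_funcs_on_diff_domains}, $P_{\tilde k,\Omegat,\tilde X_n}(x) = P_{k,\Omega,\tilde X_n}(x) = \Vert k(\cdot,x) - \Pi_{\nsb,V(\tilde X_n)}k(\cdot,x)\Vert_{\nsb} = \dist(k(\cdot,x),V_n)_{\Ha}$ for $x\in\Omegat$, so maximizing the power function over $\Omegat$ is exactly maximizing $\dist(\cdot,V_n)_{\Ha}$ over $\Ft$. (Here the $\argmax$ exists because $\Omegat$ can be taken compact, or one invokes the $\gamma$-weak remark.) Hence the sequence $(\tilde X_n)$ produced by $P$-greedy is a valid realization of the abstract algorithm, it is nested by construction, and Corollary~\ref{cor:decay_abstract_setting_prod_2}(i) gives $\sigma_n(\Ft)_{\Ha} \le 2^{5\alpha+1}C_0 n^{-\alpha}$ in the algebraic case, (ii) gives $\sigma_n(\Ft)_{\Ha}\le\sqrt{2C_1}e^{-2^{-1-2\alpha}C_2 n^\alpha}$ in the exponential case. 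Finally, Lemma~\ref{lem:sigma_equals_powerfunc} rewrites the left-hand side as $\Vert P_{\tilde k,\Omegat,\tilde X_n}\Vert_{L^\infty(\Omegat)}$, and the definition \eqref{eq:definition_power_func} of the power function on $\Omegat$ turns this into the claimed bound $\Vert f - \Pi_{\nsa,\tilde V(\tilde X_n)}f\Vert_{L^\infty(\Omegat)} \le (\cdot)\,\Vert f\Vert_{\nsa}$ for all $f\in\nsa$, with the stated constants $\tilde C_0 = 2^{5\alpha+1}C_0$ and $\tilde C_2 = 2^{-1-2\alpha}C_2$.

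There is no real obstacle here — the work was front-loaded into Theorem~\ref{th:abstract_estimate_2}, Corollary~\ref{cor:decay_abstract_setting_prod_2}, and the commuting-projection lemmas. The only points requiring a word of care are: (a) justifying that a rate on \emph{some} (non-nested) point sequence $(X_n)$ on the larger domain suffices to bound the Kolmogorov width $d_n(\F)_{\Ha}$ — this is immediate from \eqref{eq:connection_2}, which does not assume nestedness; and (b) the matching of the $P$-greedy rule on $\Omegat$ with the abstract rule on $\Ft$, which is exactly the content of Lemma~\ref{lem:power_funcs_on_diff_domains}. Everything else is bookkeeping of constants inherited verbatim from \cite{DeVore2013}.
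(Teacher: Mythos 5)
Your proposal is correct and follows essentially the same route as the paper's proof: bound $d_n(\F)_{\Ha}$ via Eq.~\eqref{eq:connection_2} from the hypothesized rates on $\Omega$, apply Corollary~\ref{cor:decay_abstract_setting_prod_2}, and translate $\sigma_n(\Ft)_{\Ha}$ back into the power function on $\Omegat$ via Lemma~\ref{lem:sigma_equals_powerfunc}. If anything, you are slightly more explicit than the paper in verifying that the $P$-greedy rule on $\Omegat$ realizes the abstract selection rule \eqref{eq:selection_criterion_subset} on $\Ft$, which the paper leaves implicit.
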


\begin{proof}
We make use of the choices from Eq.~\eqref{eq:connection} to connect the abstract setting with the kernel setting:
The prerequisites from Eq.~\eqref{eq:assumption_alg_decay} and \eqref{eq:assumption_exp_decay} are equivalent to bounds on $\inf_{X_n \subset \Omega} \Vert 
P_{k, \Omega, X_n} \Vert_{L^\infty(\Omega)}$ (see Eq.~\eqref{eq:definition_power_func}). Thus, due to Eq.~\eqref{eq:connection_2} we obtain bounds on 
$d_n(\F)_{\Ha}$ as
\begin{align*}
d_n(\F)_{\Ha} \leq \inf_{X_n \subset \Omega} \Vert P_{k, \Omega, X_n} \Vert_{L^\infty(\Omega)} \leq 
\left\{
\begin{array}{ll}
C_0 n^{-\alpha} & \text{algebraic decay case} \\
C_0 e^{-c_0 n^\alpha} & \text{exponential decay case},
\end{array}
\right.
\end{align*}
for $n=1,2, \dots$\ . This ensures also the precompactness of the set $\F = \{ k(\cdot, x), x \in \Omega \}$ in $\Ha = \nsb$.

From Lemma \ref{lem:sigma_equals_powerfunc} we recall $\sigma_n(\Ft)_\Ha = \Vert P_{\tilde{k}, \Omegat, X_n} \Vert_{L^\infty(\Omegat)}$, and thus an application 
of Corollary \ref{cor:decay_abstract_setting_prod_2} by using the bounds on $d_n(\F)_{\Ha}$ gives 
\begin{align*}
\Vert P_{\tilde{k}, \Omegat, X_n} \Vert_{L^\infty(\Omegat)} = \sigma_n(\Ft)_{\Ha} \leq 
\left\{
\begin{array}{ll}
C_1 n^{-\alpha} & \text{algebraic decay case} \\
\sqrt{2 \tilde{C}_0} \gamma^{-1} e^{-c_1 n^\alpha} & \text{exponential decay case} \\
\end{array}
\right.
\end{align*}
for $n=1, 2, ...$ respective $n=2, 3, ...$ with $C_1, \tilde{C}_0, c_1$ as specified in the statement of the theorem. 
Applying the $\Vert \cdot \Vert_{L^\infty(\Omegat)}$-norm to 
Eq.~\eqref{eq:definition_power_func} finally gives
\begin{align*}
&~ \sup_{0 \neq f \in \nsa} \frac{\Vert f - \Pi_{\nsa, \tilde{V}(X_N)}(f) \Vert_{L^\infty(\Omegat)}}{\Vert f \Vert_{\nsa}} \\
=& ~ \Vert P_{\tilde{k}, \Omegat, X_n} \Vert_{L^\infty(\Omegat)} \\
\leq& \left\{
\begin{array}{ll}
C_1 n^{-\alpha} & \text{algebraic decay case} \\
\sqrt{2 \tilde{C}_0} \gamma^{-1} e^{-c_1 n^\alpha} & \text{exponential decay case} \\
\end{array}
\right. .
\end{align*}
Rearranging gives the final result
\begin{align*}
\Vert f - \Pi_{\nsa, \tilde{V}(\tilde{X}_n)} f \Vert_{L^\infty(\Omegat)} \leq \Vert f \Vert_{\nsa} \cdot \left\{
\begin{array}{ll}
C_1 n^{-\alpha} & \text{algebraic decay case} \\
\sqrt{2\tilde{C}_0} \gamma^{-1} e^{-c_1 n^\alpha} & \text{exponential decay case} \\
\end{array}
\right.
\end{align*}
for all $f \in \nsa$.
\end{proof}

\subsection{Applications of the main result} \label{subsec:application_main_result}

There are several implications of \Cref{th:main_result}, and we discuss them separately in the next subsections.  
Under the notion \textit{non-Lipschitz} we address in the following quite arbitrary domains $\Omegat$, emphasizing that they can e.g.\ have irregular boundaries. We only require convergence rates on a larger domain $\Omega$, which are frequently available under some conditions on $\Omega$, e.g.\ having a Lipschitz boundary. 

The general overall idea is always to leverage \Cref{th:main_result} to carry over convergence rates (in the number of interpolation points) from a larger domain $\Omega$ to a smaller domain $\Omegat \subset \Omega$.

\subsubsection{Error estimates for optimally chosen points}

Theorem \ref{th:main_result} states $\Vert \cdot \Vert_{L^\infty(\Omegat)}$ convergence rates for interpolation on $\Omegat$ via $P$-greedily chosen interpolation 
points $X_n$, as soon as one knows convergence rates for interpolation on a larger domain $\Omega \supset \Omegat$. 
However, these greedily chosen points do not need to be optimal. For optimal points we have the trivial estimate
\begin{align*}
\inf_{\hat{X}_n \subset \Omegat} \Vert P_{k, \Omegat, \hat{X}} \Vert_{L^\infty(\Omegat)} \leq \Vert P_{k, \Omegat, X_n} \Vert_{L^\infty(\Omegat)} \qquad \forall X_n \subset \Omegat,
\end{align*}
where the right hand side can be further upper bounded with help of Theorem \ref{th:main_result}. 
This implies that the estimates of \Cref{th:main_result} are not limited to greedily selected points: 
It is also possible to obtain error estimates for optimally chosen interpolation points on $\Omegat$.

\subsubsection{Stability of the convergence rates with respect to the domain} 
\label{subsubsec:stability_wrt_domain}

Theorem \ref{th:main_result} implies the stability of the convergence rate with respect to the domain: Given a kernel $k$ and two nested 
domains 
$\Omegat \subset \Omega$, the convergence rate on $\Omegat$  (in terms of polynomial convergence $n^{-\alpha}$ or exponential convergence $\exp(-cn^\alpha)$) is 
at least as fast as the convergence on the larger domain $\Omega$. The case of an even faster convergence rate is exemplified in a numerical example in Subsection 
\ref{subsec:monotonicity_conv_rates}. 

An important application of this fact are kernels of infinite smoothness such as the Gaussian kernel: Sampling inequalities for those kernels 
\citep{oversampling_boundary, LEE201440, rieger2010sampling} usually assume quite restrictive assumptions on the domain\footnote{These sampling inequalities are 
usually formulated in terms of the fill distance $h_n \equiv h_{X_n, \Omega}$. For asymptotically uniformly distributed points it holds $h_n \asymp n^{-1/d}$. Thus we directly state 
these results in terms of the number of interpolation points.}: 
Here, $C$ is a generic constant.
\begin{enumerate}
\item Theorem 3.5 in \citep{rieger2010sampling} gives a rate of decay $\exp(-C\log(n)n^\frac{1}{2d})$ for bounded Lipschitz domains satisfying an interior cone 
condition, while \citep[Theorem 4.5]{rieger2010sampling} gives a decay of $\exp(-C\log(n)n^\frac{1}{d})$ for compact cubes $\Omega$.
\item Theorem 3.5 in \citep{oversampling_boundary} gives an improved rate of $\exp(-C\log(Cn)n^\frac{1}{d})$ for bounded domains that are star-shaped  with 
respect to a ball, if oversampling near the boundary is used.
\item Similar estimates for a broader class of kernels are given in \citep{LEE201440}. However, they are restricted to sets which are given as the union of 
cubes of the same side length.
\end{enumerate}
Using our approach we can directly conclude the best available convergence rate for any bounded domain $\Omegat$. To this end we simply consider a compact cube 
$\Omega$ such that $\Omega \supset \Omegat$. Then the application of Theorem \ref{th:main_result} in conjunction with \citep[Theorem 3.5]{oversampling_boundary} 
directly gives the same convergence rate for $\Omegat$. We summarize this statement in the following corollary. In the proof we use an extension of 
Corollary~\ref{cor:decay_abstract_setting_prod_2} that allows us to deal with an additional $\log(n)$ factor in the exponent. This extension is stated and 
proven in Corollary~\ref{cor:decay_abstract_setting_prod_3} in Appendix~\ref{sec:appendix}.

\begin{cor}
Consider the Gaussian kernel $k: \Omegat \times \Omegat \rightarrow \R$ and a bounded domain $\Omegat \subset \R^d$. 
Then there exists a nested sequence of point sets $X_n \subset \Omegat$ such that it holds
\begin{align*}
\Vert f - \Pi_{\nsa, V(X_n)}(f) \Vert_{L^\infty(\Omegat)} \leq C' \cdot e^{-C \log(n) n^{1/d}} \cdot \Vert f \Vert_{\nsa}
\end{align*}
for all $f \in \ns$ and $n \in \N$.
\end{cor}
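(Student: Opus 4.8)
The plan is to reduce the corollary to an application of Theorem~\ref{th:main_result} by sandwiching the given bounded domain $\Omegat$ between itself and a compact cube, and then importing the known sampling inequality for the cube. First I would choose a compact cube $Q \subset \R^d$ with $\Omegat \subset Q$; such a cube exists precisely because $\Omegat$ is bounded. The Gaussian kernel on $Q$ is continuous and, after the harmless normalization $k(x,x)\le 1$ used throughout Section~\ref{sec:kernel_interpolation}, satisfies the standing hypotheses of the abstract framework. Thus $\Omegat \subset \Omega := Q \subset \R^d$ is a valid pair of nested domains in the sense of Eq.~\eqref{eq:nested_domains}, and $\tilde k$ is the restriction of the Gaussian kernel to $\Omegat$ as in Eq.~\eqref{eq:def_restricted_kernel}.

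Next I would invoke \cite[Theorem~4.5]{rieger2010sampling} (equivalently the cube case recalled in Subsection~\ref{subsubsec:monotonicity_gaussian}), which provides, on the cube $\Omega = Q$, a sequence of point sets $X_n \subset \Omega$ with
\begin{align*}
\Vert f - \Pi_{\nsb, V(X_n)} f \Vert_{L^\infty(\Omega)} \leq C_1 \, e^{-C \log(n)\, n^{1/d}} \cdot \Vert f \Vert_{\nsb} \qquad \forall f \in \nsb .
\end{align*}
This is exactly a bound on $\inf_{X_n\subset\Omega}\Vert P_{k,\Omega,X_n}\Vert_{L^\infty(\Omega)}$, hence on $d_n(\F)_{\Ha}$ via Eq.~\eqref{eq:connection_2}, of the form $C_1 e^{-C\log(n) n^{1/d}}$. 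Because of the extra $\log(n)$ factor this is not literally one of the two cases covered by Corollary~\ref{cor:decay_abstract_setting_prod_2}; here I would instead appeal to the announced extension, Corollary~\ref{cor:decay_abstract_setting_prod_3} in Appendix~\ref{sec:appendix}, which handles decay of the form $C_0 e^{-c_0 \log(n) n^\alpha}$ and yields $\sigma_n(\Ft)_{\Ha} \le \sqrt{2C_1}\, e^{-C' \log(n) n^{1/d}}$ for the $P$-greedy sequence run on $\Omegat$.

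Finally I would translate this back into the interpolation statement. By Lemma~\ref{lem:sigma_equals_powerfunc}, $\sigma_n(\Ft)_{\Ha} = \Vert P_{\tilde k,\Omegat,\tilde X_n}\Vert_{L^\infty(\Omegat)}$, where $\tilde X_n \subset \Omegat$ are the (nested) $P$-greedy points for $\tilde k$ on $\Omegat$. Taking the $L^\infty(\Omegat)$-norm in the defining identity Eq.~\eqref{eq:definition_power_func} of the power function and rearranging gives
\begin{align*}
\Vert f - \Pi_{\nsa, V(\tilde X_n)}(f)\Vert_{L^\infty(\Omegat)} \leq \sqrt{2C_1}\, e^{-C' \log(n)\, n^{1/d}} \cdot \Vert f \Vert_{\nsa} \qquad \forall f\in\nsa,
\end{align*}
which is the asserted estimate with $C' := \sqrt{2C_1}$ and the constant $C$ replaced by the (still positive) constant produced by Corollary~\ref{cor:decay_abstract_setting_prod_3}. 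The only genuinely non-routine point is the $\log(n)$ factor: the abstract decay transfer of Corollary~\ref{cor:decay_abstract_setting_prod_2} is stated for pure $n^{-\alpha}$ and $e^{-c n^\alpha}$ rates, so one must check that the algebraic manipulation in \cite[Proof of Corollary~3.3]{DeVore2013} still goes through when $c_0$ is replaced by $c_0\log(n)$ — this is exactly what Corollary~\ref{cor:decay_abstract_setting_prod_3} does, and everything else is a direct substitution into already-established lemmas.
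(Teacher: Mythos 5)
Your proposal follows essentially the same route as the paper: enclose the bounded $\Omegat$ in a compact cube, import the sampling inequality of \cite[Theorem 4.5]{rieger2010sampling} to bound $d_n(\F)_\Ha$, pass through the $\log(n)$-augmented decay transfer of Corollary~\ref{cor:decay_abstract_setting_prod_3}, and translate back via Lemma~\ref{lem:sigma_equals_powerfunc} and the power function identity. The only detail you leave implicit is that Corollary~\ref{cor:decay_abstract_setting_prod_3} gives the bound only for $n \ge 2$, so the claimed validity for all $n \in \N$ requires a trivial enlargement of the constant to cover $n=1$, as the paper notes parenthetically.
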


\begin{proof}
The proof is very similar to the proof of Theorem \ref{th:main_result} for the exponential decay case, we only need to address the additional $\log(n)$ term in the exponent: \\
Since $\Omegat$ is bounded, we can consider a compact cube $\Omega$ such that $\Omegat \subset \Omega$. Due to \citep[Theorem 4.5]{rieger2010sampling} we have the existence of (non-necessarily nested) set of points $(X_n)_{n \in \N} \subset \Omega$ such that for $n \in \N, f \in \ns$ it holds
\begin{align*}
\Vert f - \Pi_{\ns, V(X_n)}(f) \Vert_{L^\infty(\Omega)} \leq C_0 \exp(-c_0 \log(n)n^\frac{1}{d}).
\end{align*}
Therefore it follows that $d_n(\F)_{\Ha} \leq C_0 \exp(-c_0 \log(n)n^\frac{1}{d})$. Leveraging Corollary \ref{cor:decay_abstract_setting_prod_3} from Appendix~\ref{sec:appendix} we obtain 
\begin{equation*}
\sigma_n(\Ft)_{\Ha} \leq \sqrt{2 \tilde{C}_0} e^{-\tilde{c}_1 \log(n) n^\alpha}
\end{equation*}
for $n = 4, 5, \dots $ with $\tilde{c}_1 = 2^{-(3+\alpha)} c_0 < c_0$.
Now we conclude as in the proof of Theorem \ref{th:main_result}: 
\begin{align*}
~ \sup_{0 \neq f \in \nsa} \frac{\Vert f - \Pi_{\nsa, \tilde{V}(X_n)}(f) \Vert_{L^\infty(\Omegat)}}{\Vert f \Vert_{\nsa}} 
=& ~ \Vert P_{\tilde{k}, \Omegat, X_n} \Vert_{L^\infty(\Omegat)} \\
=& \sigma_n(\Ft)_{\Ha} \\
\leq& \sqrt{2C_0} e^{- \tilde{c}_1 \log(n) n^\alpha}
\end{align*}
Rearranging the last equation (and possibly an adjustment of the constants to include $n \in \{1, 2, 3\}$) gives the final result.
\end{proof}

We remark that the use of Theorem \ref{th:main_result} may possibly lead to non-optimal convergence rates, see e.g.\ Subsection \ref{subsec:monotonicity_conv_rates}. 
Another possible application of Theorem \ref{th:main_result} is given in the case when $\Omegat$ is a 
manifold embedded in a larger ambient domain $\Omega \subset \R^d$. The same machinery works also in this case, but due the to smaller intrinsic dimension 
of the manifold it is sometimes possible to obtain a faster convergence by working directly in $\Omegat$. 

\subsubsection{Convergence rates for non-Lipschitz domains}

\Cref{th:main_result} can be used to deduce convergence rates for interpolation on non-Lipschitz domains and thus allows a comparison to 
Sobolev spaces: 
We consider a kernel $k$ such that $\ns \asymp H^\tau(\Omega)$ for $\tau > d/2$ for a well-shaped domain $\Omega$ (e.g.\ satisfying a Lipschitz boundary), see also \Cref{sec:introduction}.
In this case, decay statements as in Eq.~\eqref{eq:assumption_alg_decay} are available, e.g.\ via sampling inequalities \citep{narcowich2005sobolev, wendland2005approximate, narcowich2006sobolev}.
Then, \Cref{th:main_result} states that the same convergence rates (with adjusted constants) also hold in $\nsa$, i.e.\ in the RKHS over the smaller domains $\Omegat \subset \Omega$. 
On top, there is an algorithm to obtain such interpolation points, namely the (weak) $P$-greedy algorithm applied to $\Omegat$. 
This is exemplified in a numerical example in Subsection 
\ref{subsec:conv_rates_cusps}. 

It is crucial to point out that the decay property of the power function, which is independent of the shape of the domain $\Omega$, 
is also a striking difference compared to Sobolev spaces $H^\tau(\Omegat)$ on $\Omegat$ if defined in their standard way via the existence of integrable weak derivates. 
Using that definition of Sobolev spaces, the RKHS $\nsa$ of the considered kernel can be smaller than the corresponding Sobolev space.
The reason for this is that the RKHS $\nsa$ always allows for an extension from $\Omega \subset \R^d$ to $\R^d$ (see \Cref{subsec:restriction_extension_domain}), 
while for Sobolev spaces specific assumptions on the boundary are required in order to have an extension operator \citep{adams2003sobolev, agranovich2015sobolev}.
However, if one defines Sobolev spaces via the restriction of the global Sobolev spaces to $\Omega \subset \R^d$,
see e.g.\ \citep[Theorem 5.1.1]{agranovich2015sobolev} or \citep[Section 2.4]{novak2006function}, 
the spaces will coincide with $\nsa$.

\begin{cor}
Consider a kernel $k: \Omega \times \Omega \rightarrow \R$ and a domain $\Omega \subset \R^d$ such that $\ns \asymp H^\tau(\Omega)$ for some $\tau > d/2$. 
Furthermore let $\Omegat \subset \Omega$ be a measurable, non-Lipschitz domain such that $H^\tau(\Omegat) \cancel{\hookrightarrow} H^\tau(\Omega)$, 
i.e.\ there is no continuous embedding of $H^\tau(\Omegat)$ into $H^\tau(\Omega)$. Then the following inclusions of the 
function spaces hold:
\begin{center}
\begin{tabular}{ccc}
  $H^\tau(\Omegat)$ & $\cancel{\hookrightarrow}$ & $H^\tau(\Omega)$ \\
  \rotatebox[origin=c]{90}{$\subsetneq$} & & \rotatebox[origin=c]{90}{$\asymp$} \\
  $\nsa$ & $\hookrightarrow$ & $\ns$ \\
\end{tabular}
\end{center}
i.e.\ the RKHS $\nsa$ over $\Omegat$ is only a subset of the corresponding Sobolev space $H^\tau(\Omegat)$ over $\Omegat$.
\end{cor}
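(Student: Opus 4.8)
The plan is to establish the two vertical relations in the diagram (the embedding $\nsa \hookrightarrow \ns$ and the strict inclusion $\nsa \subsetneq H^\tau(\Omegat)$), since the horizontal relations $H^\tau(\Omegat) \cancel{\hookrightarrow} H^\tau(\Omega)$ and $\ns \asymp H^\tau(\Omega)$ are given by hypothesis. First I would address the embedding $\nsa \hookrightarrow \ns$: this is precisely the content of the embedding operator $E$ discussed in the background section, coming from Theorem 10.46 in \cite{Wendland2005}. Every $\tilde f \in \nsa$ extends to $E\tilde f \in \ns$ with $\Vert E\tilde f\Vert_{\ns} = \Vert \tilde f\Vert_{\nsa}$, and since $E\tilde f|_{\Omegat} = \tilde f$, the map $\tilde f \mapsto E\tilde f$ (or rather, identifying $\tilde f$ with its unique RKHS extension) realizes $\nsa$ as a subspace of $\ns$ with norm preserved, so in particular the inclusion is continuous.

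Next I would argue the inclusion $\nsa \subseteq H^\tau(\Omegat)$, with continuity. The idea is to use the equivalence $\ns \asymp H^\tau(\Omega)$ on the big domain together with the norm-preserving extension: given $\tilde f \in \nsa$, its extension $E\tilde f$ lies in $\ns \asymp H^\tau(\Omega)$, hence $E\tilde f \in H^\tau(\Omega)$. Restricting a Sobolev function on $\Omega$ to the subdomain $\Omegat$ always lands in $H^\tau(\Omegat)$ with $\Vert \cdot \Vert_{H^\tau(\Omegat)} \le \Vert \cdot\Vert_{H^\tau(\Omega)}$ (restriction is bounded, this direction needs no regularity of $\Omegat$). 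Chaining: $\Vert \tilde f\Vert_{H^\tau(\Omegat)} = \Vert E\tilde f|_{\Omegat}\Vert_{H^\tau(\Omegat)} \le \Vert E\tilde f\Vert_{H^\tau(\Omega)} \lesssim \Vert E\tilde f\Vert_{\ns} = \Vert \tilde f\Vert_{\nsa}$, which gives both the set inclusion and its continuity.

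The remaining, and I expect the most delicate, point is the \emph{strictness} of $\nsa \subsetneq H^\tau(\Omegat)$ — i.e.\ that the inclusion above is not onto and not a norm equivalence. Here I would argue by contradiction: suppose $\nsa \asymp H^\tau(\Omegat)$. Then combining with $\nsa \hookrightarrow \ns \asymp H^\tau(\Omega)$, the norm-preserving property of $E$ would furnish, for every $g \in H^\tau(\Omegat)$, an extension $Eg \in H^\tau(\Omega)$ with $\Vert Eg\Vert_{H^\tau(\Omega)} \lesssim \Vert Eg\Vert_{\ns} = \Vert g\Vert_{\nsa} \lesssim \Vert g\Vert_{H^\tau(\Omegat)}$. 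This would be a bounded linear extension operator $H^\tau(\Omegat) \to H^\tau(\Omega)$, hence a continuous embedding $H^\tau(\Omegat) \hookrightarrow H^\tau(\Omega)$ (compose extension with the given $H^\tau(\Omega) \asymp \ns$ and then with restriction back, or simply note the extension operator itself witnesses the embedding), contradicting the hypothesis $H^\tau(\Omegat) \cancel{\hookrightarrow} H^\tau(\Omega)$. So the inclusion must be strict.

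The main obstacle is making the contradiction argument airtight: one must be careful that ``$\nsa \asymp H^\tau(\Omegat)$'' really does produce a \emph{bounded} extension operator, which relies on the fact that the RKHS extension $E$ is not merely linear but norm-preserving (Theorem 10.46 in \cite{Wendland2005}), so that a Sobolev-to-RKHS bound on $\Omegat$ upgrades to a Sobolev-to-Sobolev extension bound on $\Omega$. One should also note that the statement ``$H^\tau(\Omegat)\cancel{\hookrightarrow}H^\tau(\Omega)$'' is exactly the non-existence of such a bounded embedding/extension, so the logical structure closes cleanly; the subtlety is purely in tracking which inequalities are ``free'' (restriction, the given equivalence on $\Omega$, norm-preservation of $E$) versus which would require the forbidden regularity of $\Omegat$.
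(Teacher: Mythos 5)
Your proposal is correct and follows essentially the same route as the paper: the embedding $\nsa \hookrightarrow \ns$ via the norm-preserving extension operator of \cite[Theorem 10.46]{Wendland2005}, the inclusion $\nsa \subseteq H^\tau(\Omegat)$ by extending to $\ns \asymp H^\tau(\Omega)$ and restricting back, and strictness by contradiction with $H^\tau(\Omegat) \cancel{\hookrightarrow} H^\tau(\Omega)$. Your treatment of the strictness step is in fact more careful than the paper's one-line assertion (the only remaining pedantic point being that upgrading mere set equality $\nsa = H^\tau(\Omegat)$ to the norm equivalence you assume requires the open mapping theorem, which both you and the paper leave implicit).
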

\begin{proof}
The equivalence $\ns \asymp H^\tau(\Omega)$ was stated as an assumption, and holds often in practice, see e.g.\ \citep[Corollary 10.48]{wendland2005scattered}. The 
embedding $\nsa \hookrightarrow \ns$ follows from \citep[Theorem 10.46]{wendland2005scattered}. For the subset-relation $\nsa \subsetneq H^\tau(\Omegat)$ we have the 
following reasoning. Let $E$ be the extension operator $E: \nsa \rightarrow \ns$, so that
\begin{align*} %
f \in \nsa &\Rightarrow Ef \in \ns \\
&\Rightarrow Ef \in H^\tau(\Omega) \\
&\Rightarrow f = Ef|_{\Omegat} \in H^\tau(\Omegat).
\end{align*}
For the last step we simply used that in the computation of the Sobolev norm, we are just integrating over a smaller domain $\Omegat \subset \Omega$. \\
It holds $H^\tau(\Omegat) \neq \nsa$ because otherwise this would contradict the assumption $H^\tau(\Omegat) \cancel{\hookrightarrow} H^\tau(\Omega)$.
\end{proof}

\section{Numerical experiments} \label{sec:num_experiments}

In the following numerical experiments we use the basic, the linear and the quadratic Matérn kernels, which are defined as 
\begin{align}
\label{eq:matern_kernels}
\begin{aligned}
k_\mathrm{basic}(x, y) &= e^{-\Vert x - y \Vert_2}, \\
k_\mathrm{lin.}(x, y) &= e^{-\Vert x - y \Vert_2} \cdot (1 - \Vert x - y \Vert_2), \\
k_\mathrm{quadr.}(x, y) &= e^{-\Vert x - y \Vert_2} \cdot \frac{1}{3} \cdot ( 3 + 3 \Vert x - y \Vert_2 + \Vert x-y \Vert_2^2).
\end{aligned}
\end{align}
For Lipschitz domains $\Omega$, the RKHS $\ns$ of these kernels is norm equivalent to certain Sobolev spaces. 
Further details are provided in the following.

\subsection{Improvement of the convergence rates on subdomains}
\label{subsec:monotonicity_conv_rates}

The first numerical experiment will illustrate that it is also possible to obtain a faster convergence order, i.e.\ the proven stability of the convergence order also allows to have an improved convergence order.\footnote{We also considered ``monotonicity'' as possible notion instead of ``stability'', but this would be misleading as monotonicity in 
convergence behaviour might be misunderstood as the (trivial) decay of convergence rates over increasing dimension.}
For this we consider the Lipschitz domains
\begin{align*}
\Omega &= B_1(0), \\
\Omegat &= B_1(0) \setminus B_{1/2}(0) \subset \Omega,
\end{align*}
with $B_r(0) := \{ x \in \R^2 ~ | ~ \Vert x \Vert < r \}$. We use the kernel
\begin{align*}
k(x, y) := k_\mathrm{quadr.}(x, y) + \chi(x) \chi(y) \cdot k_\mathrm{lin.}(x, y)
\end{align*}
where $k_\mathrm{quadr.}$ is the quadratic Mat{\'e}rn kernel and $k_\mathrm{lin.}$ is the linear Mat{\'e}rn kernel as defined in Eq.\ \eqref{eq:matern_kernels} and $\chi$ is the indicator function of $B_{1/2}(0)$. For Lipschitz 
domains $\Omega'$ such as $\Omega$ or $\tilde{\Omega}$, the quadratic Mat{\'e}rn kernel satisfies $\mathcal{H}_{k_\mathrm{quadr.}}(\Omega') \asymp H^{\tau_1}(\Omega')$ with 
$\tau_1 = (d+5)/2 = 7/2$, while the linear Mat{\'e}rn kernel satisfies $\mathcal{H}_{k_\mathrm{lin.}}(\Omega') \asymp H^{\tau_2}(\Omega')$ with $\tau_2 = (d+3)/2 = 5/2$. 
Therefore, according to \citep{santin2017convergence} the expected convergence rate of $P$-greedy using $k_\mathrm{quadr.}$ or $k_\mathrm{lin.}$ on either $\Omegat$ or $\Omega$ is
\begin{align}
\label{eq:conv_rates_matern}
\frac{1}{2} - \frac{\tau}{d} = \left\{
\begin{array}{ll}
-0.75 & \text{linear Mat{\'e}rn kernel } k_\mathrm{lin.} \\ %
-1.25 & \, \text{quadratic Mat{\'e}rn kernel } k_\mathrm{quadr.} \\ %
\end{array}
\right. .
\end{align}

The numerically observed convergence using $k$ is visualized in Figure \ref{fig:P_greedy_monotonicity}. One can observe that the decay of $P$-greedy using $k$ 
on the smaller domain $\Omegat \subset \Omega$ is faster compared to the decay on the larger domain $\Omega$. The decay on $\Omegat$ follows an asymptotic of 
$n^{-1.25}$ because it holds $k|_{\Omegat \times \Omegat} = k_\mathrm{quadr.}$ in accordance with Eq.~\eqref{eq:conv_rates_matern}. The decay on $\Omega$ is 
slower and seems to follow $n^{-0.75}$ which is motivated by the contribution of the linear Matérn kernel $k_\mathrm{lin.}$.

We point out that the application of Theorem \ref{th:main_result} provides only the same convergence rate for the smaller domain $\Omegat$ in this example, 
while we can observe an even faster convergence. So our notion of ``stability'' does not exclude such better rates, but rather should express that it prevents 
``worse'' rates.

\begin{figure}[h]
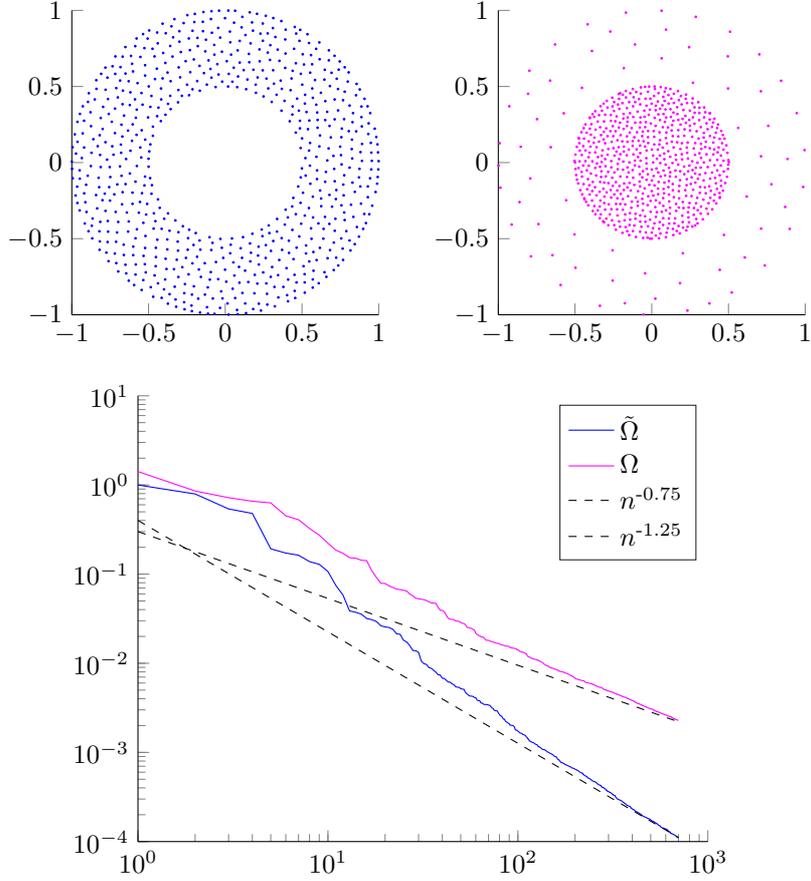

\centering
\setlength\fwidth{.35\textwidth}
\input{Figures2/monotonicity_ctrs2.tex}
\input{Figures2/monotonicity_ctrs1.tex}
\setlength\fwidth{.65\textwidth}
\input{Figures2/monotonicity_conv_rates.tex}
\caption{Top: Visualization of the selected points for $P$-greedy applied to $\Omegat$ (left) and $\Omega$ (right). Bottom: Visualization of the decay $\Vert P_{k, \Omega, X_n} \Vert_{L^\infty(\Omega)}$ and $\Vert P_{\tilde{k}, \tilde{\Omega}, X_n} \Vert_{L^\infty(\Omega)}$ ($y$-axis)
in the number $n$ of selected interpolation points ($x$-axis). The convergence rate on the smaller domain $\Omegat$ is faster compared to the larger domain $\Omega$.}
\label{fig:P_greedy_monotonicity}
\end{figure}

\subsection{Convergence rates for non-Lipschitz domains}
\label{subsec:conv_rates_cusps}

In this subsection we consider the application of the $P$-greedy algorithm to the two domains
\begin{align}
\label{eq:definition_domains}
\begin{aligned}
\Omega &:= [0, 1]^2, \\
\tilde{\Omega} &:= \{ x \in \Omega ~ | ~ \Vert x \Vert_2 > 1 \} \subset \Omega,
\end{aligned}
\end{align}
so that the domain $\tilde{\Omega}$ has two cusps at $(0, 1)^\top, (1, 0)^\top \in \R^2$, i.e.\ it does not have a Lipschitz boundary. 
For this example we use in Subsection \ref{subsubsec:matern}
the basic Mat{\'e}rn kernel $k_\mathrm{basic}$ and the linear Mat{\'e}rn kernel $k_\mathrm{lin.}$, and in Subsection \ref{subsubsec:matern} the 
Gaussian kernel. 

\subsubsection{Matérn kernels} \label{subsubsec:matern}

Although $\tilde{\Omega}$ does not have a Lipschitz boundary, the decay of the power function follows the expected decay for Lipschitz domains like 
$\Omega$. Especially -- in accordance with Theorem \ref{th:main_result} -- we can observe a convergence rate as for the Matérn kernels
\begin{align*}
\Vert P_{\tilde{k}, \tilde{\Omega}, X_n} \Vert_{L^\infty(\tilde{\Omega})}
&\asymp n^{-\frac{1}{2} - \frac{\tau}{d}} \\
&= \left\{
\begin{array}{ll}
n^{-1/4} & \, \textrm{basic Mat{\'e}rn kernel} \\
n^{-3/4} & \, \textrm{linear Mat{\'e}rn kernel} \\
\end{array}
\right., 
\end{align*}
as visualized in Figure \ref{fig:P_greedy_2D}. Furthermore, the points chosen by $P$-greedy seem to be asymptotically uniformly distributed in $\Omegat$ and in particular no clustering next to the cusps is 
observed. Moreover it can be observed that, despite having the same rate, the blue $\Vert P_{\tilde{k}, \Omegat, X_n} \Vert_{L^\infty(\Omegat)}$ 
curve is lower than the magenta $\Vert P_{k, \Omega, X_n} \Vert_{L^\infty(\Omega)}$ curve, i.e., there is a smaller multiplicative constant in front of the 
rates.
This makes actually sense, since $500$ points were selected in $\Omegat$, which results in a higher density compared to 
$500$ points within $\Omega$.

\begin{figure}[h]
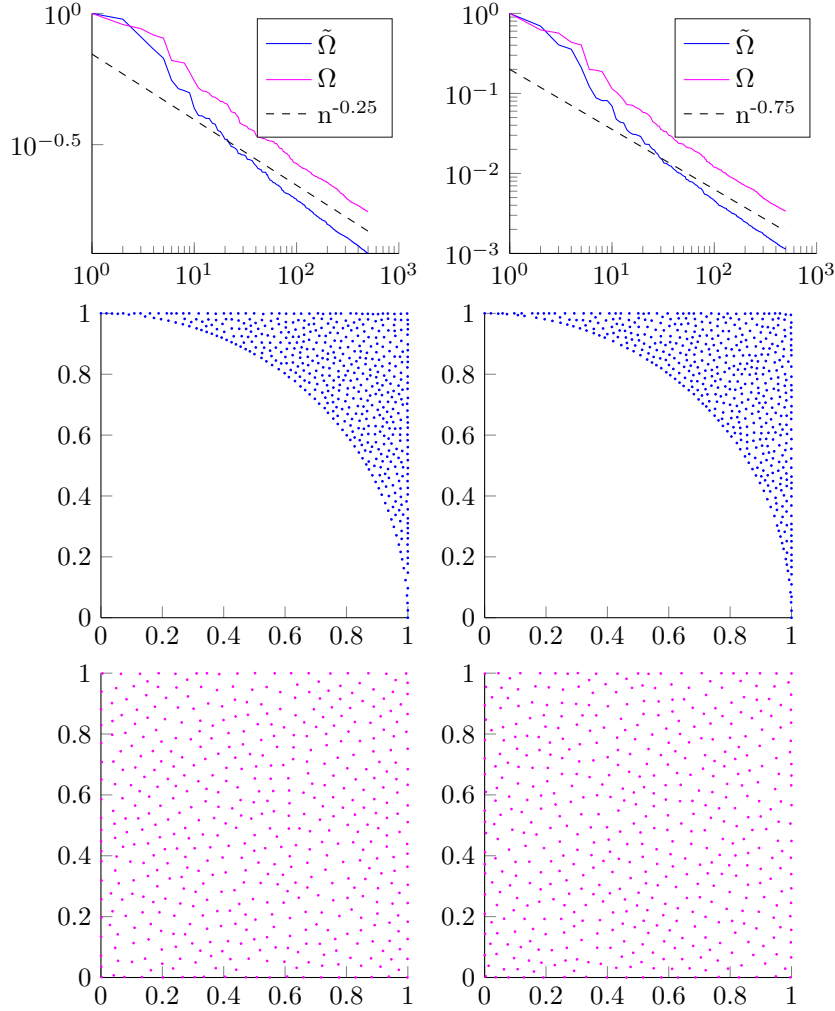

\centering
\setlength\fwidth{.35\textwidth}
\input{Figures2/mat0_P_decay.tex}
\input{Figures2/mat1_P_decay.tex}
\input{Figures2/mat0_ctrs1.tex}
\input{Figures2/mat1_ctrs1.tex}
\input{Figures2/mat0_ctrs2.tex}
\input{Figures2/mat1_ctrs2.tex}
\caption{Visualization of the $P$-greedy algorithm applied to $\Omega$ and $\tilde{\Omega} \subset \Omega$ as defined in Eq.~\eqref{eq:definition_domains}: 
Left: Basic Mat{\'e}rn kernel, right: Linear Mat{\'e}rn kernel. On top the decay of $\Vert P_{k, \Omega, X_n} \Vert_{L^\infty(\Omega)}$ respectively $\Vert 
P_{\tilde{k}, \tilde{\Omega}, X_n} \Vert_{L^\infty(\tilde{\Omega})}$ is displayed. The $P$-greedy selected points are visualized in the middle row (for $\Omegat$) and bottom row (for $\Omega$).}
\label{fig:P_greedy_2D}
\end{figure}

\subsubsection{Gaussian kernel}  \label{subsubsec:gaussian}

Also for the Gaussian kernel the decay of the power function for $P$-greedy on the non-Lipschitz domain $\Omegat$ follows at least the same decay as for 
Lipschitz domains like $\Omega$ as visualized in Figure \ref{fig:P_greedy_2D_gaussian}.

This especially illustrates the results of Subsection \ref{subsubsec:stability_wrt_domain} about the Gaussian kernel, where it was elaborated that there are no convergence rates known so far for interpolation with the Gaussian kernel on non-Lipschitz domains.
The visualization of the chosen interpolation points in Figure \ref{fig:P_greedy_2D_gaussian} shows that the points cluster next to the boundary, 
which can be expected for analytic kernels in accordance with results in \citep{oversampling_boundary}. 
This clustering is especially visible in the sharp corners of the domain $\Omegat$.

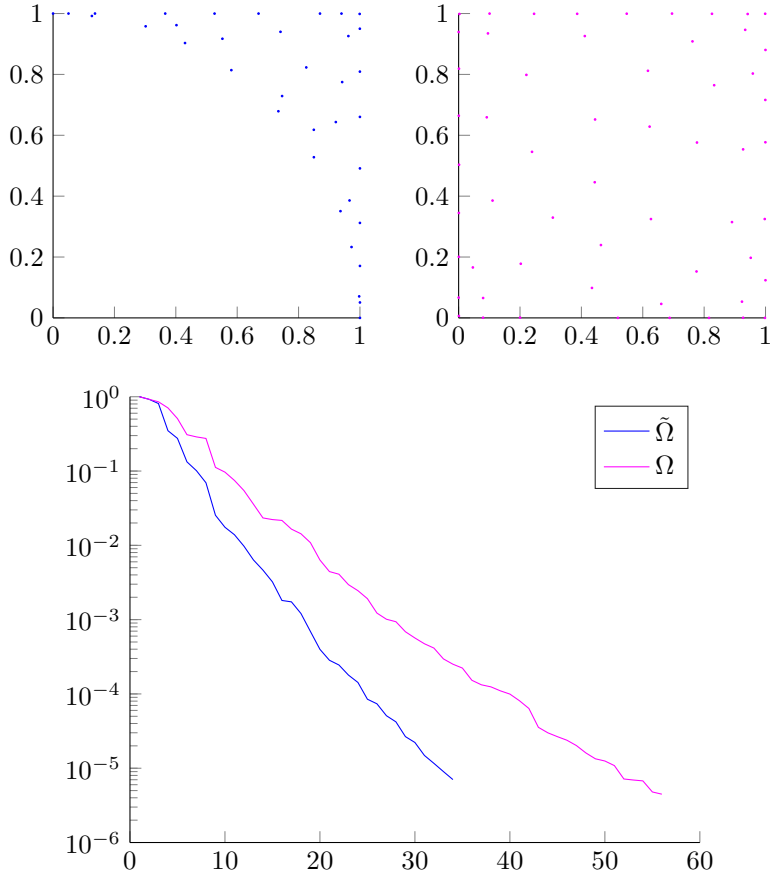
\begin{figure}[h]
\centering
\setlength\fwidth{.35\textwidth}
%
%
\begin{tikzpicture}

\begin{axis}[%
width=0.951\fwidth,
height=0.951\fwidth,
at={(0\fwidth,0\fwidth)},
scale only axis,
xmin=0,
xmax=1,
ymin=0,
ymax=1,
axis background/.style={fill=white},
axis x line*=bottom,
axis y line*=left
]
\addplot [color=blue, draw=none, mark size=0.3pt, mark=*, mark options={solid, blue}, forget plot]
  table[row sep=crcr]{%
0.301660002172934	0.958036509686258\\
0.999999999981787	1.62881874345478e-05\\
0.999068717089444	0.998710070095816\\
0.849857847259978	0.527854582793018\\
1.5261139045844e-05	0.999999999905441\\
0.999960723444986	0.491208843370857\\
0.669565287151331	0.999875050074936\\
0.972577362667315	0.232843398567893\\
0.581000597195964	0.814009367840534\\
0.999787457261159	0.80916791294833\\
0.825120006675552	0.822942223589454\\
0.365495734129728	0.999952616664732\\
0.136594751698536	0.999983058634327\\
0.999992019916987	0.170747819883965\\
0.869810034887475	0.999860534231714\\
0.921338266468856	0.643432575016856\\
0.551849862392558	0.91719373855688\\
0.966040696690268	0.385823369956581\\
0.746369125087001	0.728868217200214\\
0.962332752935254	0.925955062386249\\
0.126449123184255	0.991974227433456\\
0.997525201100167	0.0706900520329418\\
0.734244217187931	0.678946869792068\\
0.741122909732822	0.939950906363187\\
0.999881655819483	0.660392608517091\\
0.429981986627813	0.903066510817748\\
0.999634833664539	0.950147096288402\\
0.52620808531736	0.999835712641843\\
0.0502605958559874	0.999992471246029\\
0.936570625420249	0.350708222206338\\
0.402118367829218	0.961823995601125\\
0.999897356253857	0.312111473342074\\
0.939849556678059	0.999841783756365\\
0.849636996329999	0.618059559109061\\
0.942130574336036	0.774819218729476\\
0.999997200419079	0.0507152349190861\\
};
\end{axis}

\begin{axis}[%
width=1.227\fwidth,
height=0.92\fwidth,
at={(-0.16\fwidth,-0.101\fwidth)},
scale only axis,
xmin=0,
xmax=1,
ymin=0,
ymax=1,
axis line style={draw=none},
ticks=none,
axis x line*=bottom,
axis y line*=left
]
\end{axis}
\end{tikzpicture}%
%
%
\definecolor{mycolor1}{rgb}{1.00000,0.00000,1.00000}%
\begin{tikzpicture}

\begin{axis}[%
width=0.951\fwidth,
height=0.951\fwidth,
at={(0\fwidth,0\fwidth)},
scale only axis,
xmin=0,
xmax=1,
ymin=0,
ymax=1,
axis background/.style={fill=white},
axis x line*=bottom,
axis y line*=left
]
\addplot [color=mycolor1, draw=none, mark size=0.3pt, mark=*, mark options={solid, mycolor1}, forget plot]
  table[row sep=crcr]{%
0.958606569284845	0.802864898314709\\
0.00078890937357412	0.00629405149532403\\
0.00266625289758238	0.998601186477411\\
0.998081641788327	4.13755520894954e-05\\
0.443469955065914	0.445635920576325\\
0.547726327910299	0.999487392781013\\
0.51895015771771	0.000107936336919301\\
0.00137557092626472	0.503291394599683\\
0.998919278859817	0.999375300976723\\
0.997443187027926	0.324832205722367\\
0.220794975403636	0.79847547441334\\
0.202454468290183	0.177925858986405\\
0.775161524042295	0.152551462707617\\
0.245855703369031	0.99917516859096\\
0.776631082691697	0.576205674156734\\
0.000626675845690428	0.200683615742686\\
0.825511330196571	0.999213007913655\\
0.815782740225094	5.87660342421525e-06\\
0.00111378402652507	0.818831723058411\\
0.616850735685013	0.811928887118456\\
0.200258578129063	0.000891650462247706\\
0.999628793422505	0.577041985751069\\
0.999877712542627	0.123687630254002\\
0.110478119057229	0.385482999023321\\
0.0916372664409444	0.659042013269988\\
0.0955074911024187	0.934732735502113\\
0.626657884063177	0.324753451768397\\
0.0797947942717375	0.0653399722488788\\
0.999862475302787	0.880354852402363\\
0.890767773643987	0.314757387915598\\
0.761995382439941	0.908514255021298\\
0.434438581859014	0.0984605970987465\\
0.411070530293437	0.925980503599976\\
0.0796305474799995	0.000670815118030132\\
0.923288626234567	0.0533753096050714\\
0.93386047992158	0.946460916824434\\
0.100784059912293	0.999818296875398\\
0.239139559228316	0.545658364359959\\
0.444651194501692	0.651893684375142\\
0.660290819326822	0.0459688388622215\\
0.463666550763128	0.239357166181581\\
0.386321019419994	0.99914416297683\\
0.951784863019497	0.197437260947459\\
0.000553858352565162	0.664273078790187\\
0.927015811168663	0.553741637976951\\
6.493148702158e-05	0.0666838921653704\\
8.4212271623807e-06	0.939320274302818\\
0.832706774283575	0.764466963535734\\
0.927070041022267	0.000517198767625127\\
0.306955024783545	0.329533232339631\\
0.99939071589692	0.716127915608555\\
0.0463310421554083	0.165757219825288\\
0.622312769950298	0.628428053660371\\
0.941800114152583	0.99849305541729\\
0.695332914798672	0.999834277115764\\
0.687548665191918	7.71370382777192e-05\\
0.000682610394365235	0.344846540397056\\
};
\end{axis}

\begin{axis}[%
width=1.227\fwidth,
height=0.92\fwidth,
at={(-0.16\fwidth,-0.101\fwidth)},
scale only axis,
xmin=0,
xmax=1,
ymin=0,
ymax=1,
axis line style={draw=none},
ticks=none,
axis x line*=bottom,
axis y line*=left
]
\end{axis}
\end{tikzpicture}%
\setlength\fwidth{.65\textwidth}
%
%
\definecolor{mycolor1}{rgb}{1.00000,0.00000,1.00000}%
\begin{tikzpicture}

\begin{axis}[%
width=0.951\fwidth,
height=0.75\fwidth,
at={(0\fwidth,0\fwidth)},
scale only axis,
xmin=0,
xmax=60,
ymode=log,
ymin=1e-06,
ymax=1,
yminorticks=true,
axis background/.style={fill=white},
axis x line*=bottom,
axis y line*=left,
legend style={legend cell align=left, align=left, draw=white!15!black}
]
\addplot [color=blue]
  table[row sep=crcr]{%
1	1\\
2	0.924942101178921\\
3	0.809567963137884\\
4	0.348894612274186\\
5	0.275258859829532\\
6	0.13288668490263\\
7	0.10106167885387\\
8	0.0695555123998578\\
9	0.0254050292811703\\
10	0.0175153649740007\\
11	0.0138953193708314\\
12	0.00974835181468541\\
13	0.00635994508583122\\
14	0.0046423806863557\\
15	0.00322220397238894\\
16	0.0018117017153817\\
17	0.00173458514203093\\
18	0.00120995872751519\\
19	0.000692567164484252\\
20	0.000398211908310666\\
21	0.000283932426041077\\
22	0.000245151716902607\\
23	0.000179482703385516\\
24	0.000142718752824402\\
25	8.48434762581076e-05\\
26	7.36664461704768e-05\\
27	5.09117084171781e-05\\
28	4.20877641448112e-05\\
29	2.66313913199689e-05\\
30	2.22055620476803e-05\\
31	1.48591006784849e-05\\
32	1.16283223863817e-05\\
33	9.01772545356824e-06\\
34	7.03391762699317e-06\\
};
\addlegendentry{$\tilde{\Omega}$}

\addplot [color=mycolor1]
  table[row sep=crcr]{%
1	1\\
2	0.92422245291835\\
3	0.855175009482267\\
4	0.707324503637663\\
5	0.50833044129828\\
6	0.30858593511368\\
7	0.288081837273681\\
8	0.274825890895453\\
9	0.112037658107928\\
10	0.0966776303393007\\
11	0.0747810408904472\\
12	0.0546040318855081\\
13	0.0358013467444573\\
14	0.0233860439850596\\
15	0.0222202000870682\\
16	0.0216423617544871\\
17	0.0165310351153102\\
18	0.0143173253530174\\
19	0.0108604816567428\\
20	0.00634519171668204\\
21	0.00443530865983015\\
22	0.00408667533523133\\
23	0.00295819321030714\\
24	0.00245452612696749\\
25	0.00191481100711991\\
26	0.00122526905277201\\
27	0.00101244799679299\\
28	0.000937303803432686\\
29	0.000687043165060538\\
30	0.000566182425729883\\
31	0.000473499526617585\\
32	0.000414220631948504\\
33	0.000296271482791938\\
34	0.000251300138832125\\
35	0.000222803169510639\\
36	0.000152588976552207\\
37	0.000132672990057775\\
38	0.000124516300799887\\
39	0.000110026574962806\\
40	9.94975470711253e-05\\
41	8.11248088773066e-05\\
42	6.37308239463111e-05\\
43	3.55105891951159e-05\\
44	2.99474707454317e-05\\
45	2.65986832356293e-05\\
46	2.38201753573834e-05\\
47	2.01944499001946e-05\\
48	1.60594764003811e-05\\
49	1.33939047348481e-05\\
50	1.25090487051269e-05\\
51	1.08499299436104e-05\\
52	7.16442192894813e-06\\
53	6.93697412995289e-06\\
54	6.75496143445211e-06\\
55	4.79274851483598e-06\\
56	4.47231689905235e-06\\
};
\addlegendentry{$\Omega$}

\end{axis}

\begin{axis}[%
width=1.227\fwidth,
height=0.92\fwidth,
at={(-0.16\fwidth,-0.101\fwidth)},
scale only axis,
xmin=0,
xmax=1,
ymin=0,
ymax=1,
axis line style={draw=none},
ticks=none,
axis x line*=bottom,
axis y line*=left
]
\end{axis}
\end{tikzpicture}%
\caption{Top: Visualization of the selected points for $P$-greedy applied to $\Omegat$ (left) and $\Omega$ (right). Bottom:
Visualization of the decay $\Vert P_{\tilde{k}, \Omegat, X_n} \Vert_{L^\infty(\Omegat)}$ and $\Vert P_{k, \Omega, X_n} \Vert_{L^\infty(\Omega)}$ ($y$-axis) in the number $n$ of selected interpolation points ($x$-axis) for the Gaussian kernel. The convergence rate on the smaller domain $\Omegat$ is clearly at least as fast as on the larger domain.}
\label{fig:P_greedy_2D_gaussian}
\end{figure}

\section{Conclusion} \label{sec:conclusion}

We extended an abstract analysis of greedy algorithms in Hilbert spaces and proved that we obtain at least the same convergence rates, if we approximate only a 
smaller set of elements. 
By using a convenient link between this abstract setting, greedy kernel approximation and restriction and extension properties of Reproducing Kernel Hilbert 
Spaces, we transfered these results to potentially non-greedy kernel interpolation: In this case, the results imply a stability of the convergence rates with respect to 
the size of the domain. Especially we were able to obtain approximation rates for kernel interpolation on general even non-Lipschitz domains, including cases with cusps 
and irregular boundaries or even manifolds. A comparison with Sobolev spaces was drawn. Comments on the implications of these results were provided.

Future work will address the distribution of the $P$-greedy selected points on non-Lipschitz domains. For Lipschitz domains the point distribution for kernels of
finite smoothness was already analyzed in \citep{wenzel2021novel}, however its analysis made use of Sobolev space arguments, which are not necessarily available for 
non-Lipschitz domains. 

\vspace{1cm}
\textbf{Acknowledgements:} The authors acknowledge the funding of the project by the Deutsche Forschungsgemeinschaft (DFG, German Research Foundation) under
Germany's Excellence Strategy - EXC 2075 - 390740016 and funding by the BMBF under contract 05M20VSA.

\bibliography{references_stability}			%

\bibliographystyle{plainnat}

\appendix

\section{Technical lemma}\label{sec:appendix}

In the following we have an extension of \Cref{cor:decay_abstract_setting_prod_2} that additionaly includes a logarithmic term.

\begin{cor} \label{cor:decay_abstract_setting_prod_3} 
For the greedy algorithm of Eq.~\eqref{eq:selection_criterion_subset} we have the following
\begin{enumerate}[label=\roman*)]
\item[iii)] If $d_n(\F)_{\Ha} \leq C_0 e^{-c_0 \log(n) n^\alpha}, n=1,2,\dots$, then it holds
\begin{align*} %
\sigma_n(\Ft)_{\Ha} &\leq \sqrt{2 \tilde{C}_0} \gamma^{-1} e^{-\tilde{c}_1 \log(n) n^\alpha}
\end{align*}
for $n=4, 5, \dots$ with $\tilde{C}_0 := \max \{ 1, C_0 \}$ and $\tilde{c}_1 = 2^{-(3+\alpha)}c_0 < c_0$.
\end{enumerate}
\end{cor}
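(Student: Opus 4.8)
The plan is to mimic the proof of \cite[Corollary 3.3]{DeVore2013} exactly as in the proof of our Corollary~\ref{cor:decay_abstract_setting_prod_2}, feeding the algebraic estimate of Theorem~\ref{th:abstract_estimate_2} with the new decay profile $d_n(\F)_\Ha \le C_0 e^{-c_0 \log(n) n^\alpha}$. Since the claimed bound for $\sigma_n(\Ft)_\Ha$ and for $\sigma_n(\F)_\Ha$ are identical and the argument is purely algebraic (it only uses $f_i \in \Ft$ when we substitute $\sigma_{N+1}(\Ft)_\Ha$ for $\sigma_{N+1}(\F)_\Ha$ in Theorem~\ref{th:abstract_estimate_2}), it suffices to prove the statement for $\sigma_n(\Ft)_\Ha$; the bound for $\sigma_n(\F)_\Ha$ then follows by the obvious monotonicity $\sigma_n(\Ft)_\Ha \le \sigma_n(\F)_\Ha$ combined with the original Corollary 3.3 of \cite{DeVore2013}, or simply by running the same argument with $\Ft$ replaced by $\F$.

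First I would recall the mechanism from \cite{DeVore2013}: one proves the decay bound by a doubling/induction argument. One fixes $n$, chooses $N$ and $K$ appropriately (typically $N \approx n/2$, $K \approx n/2$, $m \approx n/4$ up to integer parts), and applies Theorem~\ref{th:abstract_estimate_2} to bound $\sigma_n(\Ft)_\Ha^{2K} \le \prod_{i=1}^K \sigma_{N+i}(\Ft)_\Ha^2$ (using monotonicity of $i \mapsto \sigma_i(\Ft)_\Ha$) by $\left(\tfrac{K}{m}\right)^m \left(\tfrac{K}{K-m}\right)^{K-m} \sigma_{N+1}(\Ft)_\Ha^{2m} d_m(\F)_\Ha^{2K-2m}$. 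The combinatorial prefactor $\left(\tfrac{K}{m}\right)^m\left(\tfrac{K}{K-m}\right)^{K-m}$ is bounded by $2^K$ for the choices above. Then $\sigma_{N+1}(\Ft)_\Ha \le \sqrt{2C_0} \le$ a constant (using the convenience normalization $\Vert f\Vert_\Ha \le 1$, so $\sigma_j(\Ft)_\Ha \le 1$), and $d_m(\F)_\Ha \le C_0 e^{-c_0 \log(m) m^\alpha}$. Taking the $2K$-th root gives $\sigma_n(\Ft)_\Ha \le \text{const} \cdot \bigl(C_0 e^{-c_0 \log(m) m^\alpha}\bigr)^{(K-m)/K}$, and with $m/K \approx 1/2$ this produces an exponent of the form $-\tfrac{c_0}{2}\log(m) m^\alpha$ with $m \approx n/4$; substituting and absorbing the $\log$-shift and the power-of-$4$ loss in $m^\alpha$ yields $e^{-\tilde c_1 \log(n) n^\alpha}$ with $\tilde c_1 = 2^{-2-2\alpha}c_0$. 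I would present this as a short computation, pointing out that the only place where $\Ft$ (rather than $\F$) enters is the appeal to Theorem~\ref{th:abstract_estimate_2}, which is already formulated for $\sigma_n(\Ft)_\Ha$.

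The main obstacle — and the reason this needs its own statement rather than a one-line citation — is tracking how the extra $\log(n)$ factor in the exponent interacts with the re-indexing $n \rightsquigarrow m \approx n/4$. Concretely, one must verify that $\log(m) m^\alpha$ is still comparable to $\log(n) n^\alpha$ up to a multiplicative constant depending only on $\alpha$, i.e. that for all $n$ in the claimed range $n \ge 2$ one has $\log(m) m^\alpha \ge c(\alpha) \log(n) n^\alpha$ with an explicit $c(\alpha)$; pinning down the constant $\tilde c_1 = 2^{-2-2\alpha}c_0$ requires being slightly careful with the floor/ceiling in the choice of $N, K, m$ and with the case of small $n$ (which is why the conclusion is stated only for $n = 2, 3, \dots$, and why the earlier corollaries allow an adjustment of $C_0$). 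I would handle this by reducing to the clean dyadic choice whenever $n$ is large enough and by a direct check (possibly absorbed into the constant) for the finitely many remaining small values, exactly in the spirit of \cite[Proof of Corollary 3.3]{DeVore2013}. Everything else is routine bookkeeping identical to the finite- and exponential-decay cases already treated.
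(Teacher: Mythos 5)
Your proposal is correct and follows essentially the same route as the paper: the paper simply invokes the ready-made doubling inequality $\sigma_{2n}\le\sqrt{2}\,\gamma^{-1}\sqrt{d_n}$ from \cite[Eq.\ (3.5)]{DeVore2013} (your Theorem~\ref{th:abstract_estimate_2} step with $N=0$, $K=2n$, $m=n$) and then absorbs the index shift via $\log(2n)-\log(2)\ge\tfrac12\log(2n)$ for $n\ge2$, which is exactly the $\log$-comparison you identify as the crux. One caveat: your remark that the $\sigma_n(\F)_{\Ha}$ bound follows from the monotonicity $\sigma_n(\Ft)_{\Ha}\le\sigma_n(\F)_{\Ha}$ points the inequality in the wrong direction; the paper instead just reruns the identical algebra with $\F$ in place of $\Ft$, which is the alternative you also (correctly) offer.
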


\begin{proof}
The proof is a modification of the proof of \Cref{cor:decay_abstract_setting_prod_2}.
We continue at Eq.~\eqref{eq:step_within_proof}:
\begin{align*}
\sigma_n(\Ft)_{\Ha} &\leq \sqrt{2} \gamma^{-1} \cdot d_m(\F)^{(n-m)/n} = \sqrt{2} \cdot d_{\lceil n/2 \rceil}(\F)^{(n-\lceil n/2 \rceil)/n} \\
&\leq \sqrt{2} \gamma^{-1} \cdot \tilde{C}_0^{1/2} e^{-c_0 \lceil n/2 \rceil^\alpha \cdot \log (\lceil n/2 \rceil) \cdot (n-\lceil n/2 \rceil)/n} \\
&\leq \sqrt{2} \gamma^{-1} \cdot \tilde{C}_0^{1/2} e^{-c_0 2^{-\alpha} n^\alpha \cdot \log (n/2) \cdot (n-\lceil n/2 \rceil)/n} \\
&\stackrel{n \geq 2}{\leq} \sqrt{2} \gamma^{-1} \cdot \tilde{C}_0^{1/2} e^{-c_0 2^{-2-\alpha} n^\alpha \log (n/2)}.
\end{align*}
Now using $\log(n/2) = \log(n) - \log(2) \geq \frac{1}{2} \log(n)$ for $n \geq 4$ we obtain
\begin{align*}
\sigma_n(\Ft)_{\Ha} &\stackrel{n \geq 4}{\leq} \sqrt{2 \tilde{C}_0} \gamma^{-1} \cdot e^{-c_0 2^{-3-\alpha} \log (n) n^\alpha}.
\end{align*}
\end{proof}

\end{document}